\newtheorem*{rep@theorem}{\rep@title}
\newcommand{\newreptheorem}[2]{%
\newenvironment{rep#1}[1]{%
 \def\rep@title{#2 \ref{##1}}%
 \begin{rep@theorem}}%
 {\end{rep@theorem}}}
\newtheorem{theorem}{Theorem}
\newtheorem{lemma}{Lemma}[section]
\newtheorem{claim}{Claim}[section]
\newtheorem{proposition}{Proposition}[section]
\newtheorem{corollary}{Corollary}[section]
\theoremstyle{definition}
\newtheorem{definition}{Definition}[section]
\newtheorem{remark}{Remark}[section]
\begin{document}

\title[Critical exponents and Kazhdan distance]{Critical exponents of normal subgroups, the spectrum of group extended transfer operators, and Kazhdan distance }

\author{Rhiannon \textsc{Dougall}}

\address{Mathematics Institute, University of Warwick,
Coventry CV4 7AL, U.K.}
\email{R.Dougall@warwick.ac.uk}

\begin{abstract}
For a pinched Hadamard manifold $X$ and a discrete group of isometries $\Gamma$ of $X$, the critical exponent $\delta_\Gamma$ is the exponential growth rate of the orbit of a point in $X$ under the action of $\Gamma$. We show that the critical exponent for any family $\mathcal{N}$ of normal subgroups of $\Gamma_0$ has the same coarse behaviour as the Kazhdan distances for the right regular representations of the quotients $\Gamma_0/\Gamma$. The key tool is to analyse the spectrum of transfer operators associated to subshifts of finite type, for which we obtain a result of independent interest. 
\end{abstract}

\maketitle
\section{Introduction}

Let $X$ be a simply connected, complete Riemannian manifold whose curvatures are bounded between two negative constants -- this is sometimes called a \emph{pinched Hadamard manifold}. For any non-elementary discrete group of isometries $\Gamma$ of $X$, the $\Gamma$-orbit of a point inside a ball of radius $R$ grows exponentially $R$. More precisely, define the \emph{critical exponent} $\delta_\Gamma$ by
\[
\delta_\Gamma=\limsup_{R\to \infty} \frac{1}{R}\#\log \left\{ g\in \Gamma : d(x,g x)\le R \right\} .
\]
It is easy to see that the definition is independent of $x\in X$. When $\Gamma$ is non-elementary, the limit exists and $\delta_\Gamma>0$ -- see, for instance, \cite{PPS}. 

If $\Gamma$ is torsion-free then we may form the quotient manifold $M=X/\Gamma$ and geodesic flow $\phi^t:SM\to SM$ on the unit tangent bundle $SM$. We refer to a closed geodesic $\gamma$ in $M$ and the corresponding periodic orbit $\gamma$ in $SM$ interchangeably. Write $\text{Per}(\phi)$ for the collection of periodic orbits, and write $|\gamma|$ for the length of a geodesic and the period of the orbit. We say that a point $x\in SM$ is \emph{wandering} if it is contained in a neighbourhood $U$ such that $\phi^t U \cap U = \varnothing$ for all large $t$. The \emph{non-wandering} set $\Omega(\phi)$ is the collection of points that are not wandering. Note that when $M$ is compact, $\Omega(\phi)=SM$. Following \cite{PPS}, the \emph{Gurevi\v{c} pressure}, $\mathcal{P}(\phi)$, of the geodesic flow is defined by
\[
\mathcal{P}(\phi) = \limsup_{T\to\infty}\frac{1}{T}\#\left\{\gamma\in \text{Per}(\phi) : |\gamma|\le T, \gamma\cap W \ne \varnothing \right\},
\]
where $W$ is any (non-empty) open subset of $\Omega$ with compact closure. Note that \cite{PPS} defines the Gurevi\v{c} pressure $\mathcal{P}(\phi,F)$ more generally for a potential $F$, but in our case $F=0$ and so we simplify the notation $\mathcal{P}(\phi,0)=\mathcal{P}(\phi)$. Also contained in \cite{PPS} is the proof that $\mathcal{P}(\phi)=\delta_\Gamma$ when $\mathcal{P}(\phi)>0$ (which is satisfied in our later specialisation).

If $\Gamma$ is cocompact, then
\[
\delta_\Gamma = \lim_{R\to\infty}\frac{1}{R}\log \mathrm{Vol}(x,R),
\]
where $\mathrm{Vol}(x,R)$ is the volume of an $R$-ball around $x\in X$. For $n$-dimensional quaternionic hyperbolic space $\bm{H}^n_{\mathbb{H}}$, with $n\ge 2$, and the Cayley plane $\bm{H}^2_{\mathbb{O}}$, Corlette \cite{Corlette} showed that this value is isolated in the following way. If $\Gamma$ is a lattice in $\mathrm{Isom}^+(\bm{H}^n_{\mathbb{H}})$, then $\delta_\Gamma = 4n+2$; and otherwise $\delta_\Gamma \le 4n$. There is also the corresponding statement for the Cayley plane: $\delta_\Gamma = 22$ when $\Gamma$ is a lattice in $\mathrm{Isom}^+(\bm{H}^2_{\mathbb{O}})$; and otherwise $\delta_\Gamma \le 16$. Therefore, in each case, for a fixed lattice $\Gamma_0$, we have that $\delta_\Gamma\le \delta_{\Gamma_0}-2$ for all infinite index $\Gamma\le \Gamma_0$. The symmetry of the spaces $\bm{H}^n_{\mathbb{H}}$ and $\bm{H}^2_{\mathbb{O}}$ are notable in the approach to this problem, which we discuss in the next section. 

In this paper we develop a dynamical approach to analyse the critical exponent of normal subgroups $\Gamma\unlhd \Gamma_0$, of a fixed (torsion-free) \emph{convex cocompact} $\Gamma_0$. The \emph{convex cocompact} hypothesis says that the geodesic flow $\phi_0^t:SM_0\to SM_0$, where $M_0=X/\Gamma_0$, has compact non-wandering set $\Omega(\phi_0)$. 

For any $\Gamma \unlhd \Gamma_0$, we have $\delta_\Gamma\le \delta_{\Gamma_0}$ and moreover $\delta_{\Gamma}=\delta_{\Gamma_0}$ precisely when $\Gamma_0/\Gamma$ is amenable (we discuss the history of this result later). Consequently $\delta_{\Gamma}<\delta_{\Gamma_0}$ when $\Gamma_0/\Gamma$ is non-amenable. 
Our result is to describe coarse behaviour of $\delta_\Gamma$, over any family $\mathcal{N}$ of normal subgroups of $\Gamma_0$, in terms of \emph{Kazhdan distances} associated to the quotients $\Gamma_0/\Gamma$, which we explain below. Some natural families of coverings are a tower of regular covers $M_1 \to M_2 \to \cdots \to M_0$, corresponding to a family  $\Gamma_1 \le \Gamma_2 \le \cdots \le \Gamma_0$ of normal subgroups of $\Gamma_0$; and the family of all non-amenable regular covers of $M_0$, i.e. all $\Gamma\unlhd \Gamma_0$ for which $\Gamma_0/\Gamma$ is non-amenable.

In the following, $G$ is assumed to be a countable group (however, many of the definitions can be made in the setting of locally compact groups). We present the definition of an amenable group $G$ due to F{\o}lner \cite{Folner}. 
A group $G$ is \emph{amenable} if for every $\epsilon>0$, and for every finite set $A$, there exists a set $E$ which is $\epsilon,A-$invariant; that is,
\[
\#E\Delta E a\le \epsilon \# E,
\]
for all $a\in A$.

Write $\mathds{1}_E\in \ell^2(G)$ for the indicator function on the set $E$. Noting that $\#E\Delta E a = \left|\mathds{1}_E-\mathds{1}_{E a}\right|$, there is the following equivalent definition
in terms of the right regular representation $\pi_{G}: G \to \mathcal{U}(\ell^2(G))$, $(\pi_{G}(g)f)(x)=f(xg)$, due to Hulanicki \cite{Hulanicki}. A group $G$ is \emph{amenable} if and only if, for any finite generating set $A\subset G$, 
\[
\inf_{v\in \ell^2(G), \left|v\right|=1} \max_{a\in A} \left|\pi_{G}(a) v-v\right|=0.
\]
For any unitary representation $\rho : G \to \mathcal{U}(\mathcal{H})$ in a Hilbert space $(\mathcal{H},\left|\cdot\right|)$, the quantity $\kappa_A(\rho,\mathds{1})$ defined by
\[
\kappa_A(\rho,\mathds{1}):=\inf_{v\in V, \left|v\right|=1} \max_{a\in A} \left|\rho(a) v-v\right|
\]
is called the \emph{Kazhdan distance (between $\rho$ and the trivial representation $\mathds{1}$)}.
A group is said to have \emph{property} (T) if there is some $\kappa>0$ such that $\kappa_A(\rho,\mathds{1})>\kappa$ for all unitary representations that have no invariant vector. 

The main theorem of this paper is the following.
\begin{theorem}\label{manifold}
Let $\Gamma_0$ be a convex cocompact group of isometries of a pinched Hadamard manifold $X$, and let $A$ be a finite generating set for $\Gamma_0$. For any collection $\mathcal{N}$ of normal subgroups of $\Gamma_0$, we have
\[
\sup_{\Gamma \in \mathcal{N}} \delta_{\Gamma}<\delta_{\Gamma_0} \, \text{ if and only if } \, \inf_{\Gamma \in \mathcal{N}} \kappa_{A/\Gamma}(\pi_{\Gamma_0/\Gamma},\mathds{1}) > 0.
\]
\end{theorem}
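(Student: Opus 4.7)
The plan is to translate both the critical-exponent condition and the Kazhdan-distance condition into spectral data of a group-extended Ruelle transfer operator on a subshift of finite type, and then prove a quantitative comparison between the two spectral quantities.

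I would first use convex cocompactness to model the geodesic flow on the compact set $\Omega(\phi_0)\subset SM_0$ as a suspension flow over a topologically mixing subshift of finite type $(\Sigma,\sigma)$ with H\"older roof $r:\Sigma\to\mathbb{R}_{>0}$. For $\Gamma\unlhd\Gamma_0$ with quotient $G:=\Gamma_0/\Gamma$, lifting the coding to $X/\Gamma$ yields a H\"older cocycle $\psi:\Sigma\to G$, and hence a group-extended transfer operator
\[
\bigl(L^{\Gamma}_{s} f\bigr)(x)\;=\;\sum_{\sigma y=x} e^{-sr(y)}\,\pi_G\!\bigl(\psi(y)\bigr)\, f(y)
\]
acting on H\"older functions $\Sigma\to\ell^{2}(G)$. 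Writing $\rho_\Gamma(s)$ for its spectral radius and $L_s$ for the scalar case, standard Gibbs/orbit-counting arguments identify $\delta_{\Gamma_0}$ as the unique $s$ with $\rho(L_s)=1$, and $\delta_\Gamma$ as the unique $s$ with $\rho_\Gamma(s)=1$. Using that $s\mapsto \rho_\Gamma(s)$ is decreasing and real-analytic in a neighbourhood of $\delta_{\Gamma_0}$ with derivative uniformly bounded between negative constants, a mean-value argument gives
\[
\delta_{\Gamma_0}-\delta_\Gamma \;\asymp\; 1-\rho_\Gamma(\delta_{\Gamma_0}),
\]
with implied constants uniform in $\Gamma$.

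The core of the argument is to establish the quantitative spectral-gap comparison (the result of independent interest advertised in the abstract)
\[
1-\rho_\Gamma(\delta_{\Gamma_0}) \;\asymp\; \kappa_{A/\Gamma}\!\bigl(\pi_{\Gamma_0/\Gamma},\mathds{1}\bigr)^{2},
\]
with constants depending only on $\Gamma_0$, $A$ and the coding. In the easy direction I would start with an almost-invariant unit vector $v\in\ell^{2}(G)$ with $\max_{a\in A/\Gamma}|\pi_G(a)v-v|\le\varepsilon$, tensor it with the Perron eigenfunction $h$ of $L_{\delta_{\Gamma_0}}$, and use that each $\psi(y)$ is the image in $G$ of a word in $A$ of length bounded uniformly by the coding, so that $\pi_G(\psi(y))(h\otimes v)$ stays within $O(\varepsilon)$ of $h\otimes v$; this forces $\rho_\Gamma(\delta_{\Gamma_0})\ge 1-O(\varepsilon^{2})$. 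In the harder direction I would start with an approximate eigenvector $F:\Sigma\to\ell^{2}(G)$ of $L^{\Gamma}_{\delta_{\Gamma_0}}$ at eigenvalue close to $1$; a Lasota--Yorke inequality for $L_{\delta_{\Gamma_0}}$ would show that $F$ must essentially factor as $h\otimes v$, and the eigenvalue equation would then descend to a convolution-type inequality on $G$ whose invariance defect bounds $\kappa_{A/\Gamma}(\pi_G,\mathds{1})$ from above via Hulanicki's theorem.

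The main obstacle will be keeping every constant uniform across the family $\mathcal{N}$. The coding data, the spectral gap and Lasota--Yorke constants for the scalar operator $L_{\delta_{\Gamma_0}}$, and the length bounds for $\psi$ in terms of $A$, all depend only on $\Gamma_0$; however, twisting by the right regular representation of the varying quotient $G=\Gamma_0/\Gamma$ could in principle degenerate the spectral decomposition of $L^{\Gamma}_{\delta_{\Gamma_0}}$. I would handle this by factoring a high iterate of $L^{\Gamma}_{\delta_{\Gamma_0}}$ as a finite sum, over bounded words $w$ in $A$, of scalar operators tensored with $\pi_G(w)$, thereby reducing the $\ell^{2}(G)$-analysis of $L^{\Gamma}_{\delta_{\Gamma_0}}$ to a convolution operator on $G$ by a probability measure supported on bounded words in $A/\Gamma$. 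Once this uniform spectral-gap comparison is in hand, the theorem follows immediately by taking $\sup_{\Gamma\in\mathcal{N}}$ on one side and $\inf_{\Gamma\in\mathcal{N}}$ on the other.
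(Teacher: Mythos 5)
Your overall framework (Bowen coding of the flow on $\Omega(\phi_0)$, a skewing cocycle $\psi$ into $\Gamma_0/\Gamma$, and group-extended transfer operators compared with representation-theoretic data) is indeed the paper's setting, but the two central steps of your plan have genuine gaps. First, the direction $\inf_\Gamma \kappa_{A/\Gamma}=0 \Rightarrow \sup_\Gamma\delta_\Gamma=\delta_{\Gamma_0}$ cannot be run through the spectral radius of $\mathcal{L}$ the way you propose. Tensoring an almost-invariant vector $v$ with the Perron eigenfunction only produces an approximate eigenvector of a non-self-adjoint operator, and for such operators an approximate eigenvector does not bound the spectral radius from below (the pseudospectrum can be far larger than a neighbourhood of the spectrum); the telescoping error grows linearly in the iterate and is useless as $n\to\infty$. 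Worse, even a genuine lower bound on $\mathrm{spr}(\mathcal{L}_{-sr,\Gamma})$ would not suffice: $\delta_\Gamma$ is computed by counting periodic orbits that close up in $SM_\Gamma$, i.e.\ by the Gurevi\v{c} pressure over periodic words with $\psi^n(x)$ trivial, and in general one only has $P_{\mathrm{Gur}}\le\log\mathrm{spr}(\mathcal{L})$ — the inequality points the wrong way for this implication. The paper works directly at the level of $P_{\mathrm{Gur}}$, and the indispensable ingredient is the \emph{weak symmetry} of $(\psi,r)$, obtained geometrically from the time-reversal involution $\iota$ and the symmetric choice of Markov sections (Adachi, Rees); this is what allows almost-invariant vectors (via Ollivier's random-walk characterisation of the Kazhdan distance) to be converted into a lower bound on the count of words with trivial holonomy. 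Your proposal never produces closed orbits in the cover from almost-invariant vectors, and never invokes any symmetry, so this direction is missing its key mechanism. (Relatedly, your identification of $\delta_\Gamma$ as the unique $s$ with $\rho_\Gamma(s)=1$, and the claimed real-analyticity of $s\mapsto\rho_\Gamma(s)$ with uniform derivative bounds, are unjustified; the paper only uses $P_{\mathrm{Gur}}(-sr,T_{\psi_\Gamma})=0$ at $s=h_\Gamma$, the inequality $P_{\mathrm{Gur}}\le\log\mathrm{spr}$, continuity in $s$, and the $h'$ error coming from the non-injective semi-conjugacy.)

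Second, in the hard direction the uniformity over $\mathcal{N}$ is not obtained by ``reducing $\mathcal{L}^n$ to a convolution operator by a measure on bounded words in $A/\Gamma$'': the group elements appearing in $\mathcal{L}^n$ are exactly the $\psi^n$-images of \emph{admissible} words, and the whole difficulty (already in Stadlbauer's work, where the bound depends on first-return times) is that the group elements one needs — those produced by the inductive construction of sets $E_n$ with $\bigl|\sum_{g\in E_n}\rho_H(g)f\bigr|\le 2^n(1-\kappa_1)^n$ coming from the negation of F{\o}lner/almost-invariance — must be realised by admissible words of length controlled \emph{linearly and uniformly in $g$ and in $H$}. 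This is precisely the (LVR) condition the paper introduces, and its verification is a genuinely geometric argument (a shadow-lemma/Patterson--Sullivan claim producing, for every $g\in\Gamma_0$, a geodesic in the non-wandering set passing uniformly close to $x$ and $gx$ and missing rectangle boundaries). Without something playing this role your uniform spectral gap, and a fortiori the much stronger quantitative comparison $1-\rho_\Gamma(\delta_{\Gamma_0})\asymp\kappa_{A/\Gamma}^2$ that you posit (a Sunada-type statement the paper does not claim and does not need), is not established. The coarse dichotomy of the theorem only requires the two one-sided implications the paper proves, each with its own distinct toolkit: weak symmetry plus Ollivier for one, and (LVR) plus the $E_n$ construction for the other.
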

We remark that this theorem is reminiscent of results on the bottom of the spectrum of the Laplacian (for example by Sunada \cite{Sunada}) which we discuss in the next section.

If $\Gamma_0$ has property (T), then we have that 
\[
\inf_{\Gamma \unlhd \Gamma_0 : [\Gamma_0:\Gamma]=\infty} \kappa_{A/\Gamma}(\pi_{\Gamma_0/\Gamma},\mathds{1}) > 0.
\]
We remark that, in this case, $[\Gamma_0:\Gamma]=\infty$ is equivalent to $\Gamma_0/\Gamma$ being non-amenable.

It is known that the isometry group of for real hyperbolic space $\bm{H}^n_{\mathbb{R}}$ does not have property (T), and so its cocompact subgroups also do not satisfy property (T). However, when we consider groups arising from variable curvature, we do find cocompact examples with property (T). Indeed, the mechanism behind the gap in the critical exponents for $\bm{H}^n_{\mathbb{H}}$ and $\bm{H}^2_{\mathbb{O}}$, as shown by Corlette, is the fact their isometry groups have property (T) \cite{Bekka}.

\begin{corollary}
With the hypotheses of Theorem \ref{manifold}, if $\Gamma_0$ has property (T) then 
\[
\sup\left\{ \delta_\Gamma : \Gamma\unlhd \Gamma_0, [\Gamma_0:\Gamma]=\infty\right\} < \delta_{\Gamma_0}.
\]
\end{corollary}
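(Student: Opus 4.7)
The plan is to reduce to Theorem \ref{manifold} by verifying its Kazhdan-distance hypothesis uniformly over the family $\mathcal{N}:=\{\Gamma\unlhd\Gamma_0 : [\Gamma_0:\Gamma]=\infty\}$. Pulling $\pi_{\Gamma_0/\Gamma}$ back along the quotient map $\Gamma_0\twoheadrightarrow\Gamma_0/\Gamma$ gives a unitary representation $\tilde\pi_\Gamma$ of $\Gamma_0$ on $\ell^2(\Gamma_0/\Gamma)$, and directly from the definitions
\[
\kappa_A(\tilde\pi_\Gamma,\mathds{1})=\kappa_{A/\Gamma}(\pi_{\Gamma_0/\Gamma},\mathds{1}),
\]
so it suffices to lower-bound the left-hand side uniformly in $\Gamma\in\mathcal{N}$.

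Next, I would check that each $\tilde\pi_\Gamma$ has no $\Gamma_0$-invariant vector. Since $\tilde\pi_\Gamma$ factors through $\Gamma_0/\Gamma$, any such $v\in\ell^2(\Gamma_0/\Gamma)$ is automatically invariant under the right regular representation of $\Gamma_0/\Gamma$; substituting the identity into the relation $v(xg)=v(x)$ forces $v$ to be constant, which lies in $\ell^2(\Gamma_0/\Gamma)$ only when $\Gamma_0/\Gamma$ is finite, contradicting $[\Gamma_0:\Gamma]=\infty$. Property (T) of $\Gamma_0$ now provides a single Kazhdan constant $\kappa=\kappa(\Gamma_0,A)>0$ satisfying $\kappa_A(\rho,\mathds{1})\ge\kappa$ for every unitary representation $\rho$ of $\Gamma_0$ without invariant vectors. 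Applied to each $\tilde\pi_\Gamma$, this yields
\[
\inf_{\Gamma\in\mathcal{N}}\kappa_{A/\Gamma}(\pi_{\Gamma_0/\Gamma},\mathds{1})\ \ge\ \kappa\ >\ 0,
\]
and Theorem \ref{manifold} then delivers $\sup_{\Gamma\in\mathcal{N}}\delta_\Gamma<\delta_{\Gamma_0}$.

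There is essentially no technical obstacle: the corollary is a one-step consequence of Theorem \ref{manifold}, combined with the pullback compatibility of the Kazhdan distance and the standard observation that the right regular representation of an infinite discrete group has no invariant vector in $\ell^2$. The parenthetical remark that $[\Gamma_0:\Gamma]=\infty$ is equivalent to $\Gamma_0/\Gamma$ being non-amenable is a separate aside, justified by the facts that property (T) passes to quotients and that an amenable discrete group with property (T) is finite; it is not needed for the proof itself.
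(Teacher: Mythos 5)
Your proposal is correct and follows exactly the route the paper intends: property (T) of $\Gamma_0$ applied to the pulled-back representations $\tilde\pi_\Gamma$ on $\ell^2(\Gamma_0/\Gamma)$, which have no invariant vectors since $[\Gamma_0:\Gamma]=\infty$, gives a uniform lower bound on the Kazhdan distances, and Theorem \ref{manifold} then yields the gap. The paper only sketches this via the remark preceding the corollary, so your write-up simply makes the same argument explicit.
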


The proof of Theorem \ref{manifold} relies on an analysis of the dynamics of the geodesic flow, and in particular the symbolic dynamics for the geodesic flow. In this way, we relate the problem to the spectrum of group extended transfer operators. We prove an analogous theorem about the spectrum of group extended transfer operators which is of independent interest. This approach is a departure from the methods employed for the symmetric spaces.

This work was carried out as part of the author's PhD thesis. The author is therefore very grateful to Richard Sharp for introducing her to the topic, and for his careful reading of this manuscript.

\section{History and background}

A classical example of the interplay between combinatorial properties of a group, and the geometry on which it acts, is given in Brooks \cite{Brooks2}, \cite{Brooks}. Let $M \to M_0$ be a regular covering of a Riemannian manifold $M_0$ of ``finite topological type" (i.e. $M_0$ is the union of finitely many simplices). Let $\lambda_0(M)$ and $\lambda_0(M_0)$ denote the bottom of the spectrum of the Laplacian on $M$ and $M_0$ respectively. Brooks shows that $\lambda_0(M)=\lambda_0(M_0)$ if and only if the group of deck transformation given by the covering is amenable. We will refer to a result of this form as \emph{an amenability dichotomy}.
This was extended by Sunada \cite{Sunada} in the following way. We assume now that $M_0$ is compact, and so $\lambda_0(M_0)=0$, and write $M_0=X/\Gamma_0$. For any $\Gamma\unlhd \Gamma_0$ we get a regular cover $M_\Gamma=X/\Gamma$ of $M_0$. Sunada shows that for any finite generating set $A$ of $\Gamma_0$, there are constants $c_1,c_2$ depending only on the geometry of $X$ and on $A$, such that for any regular cover $M_\Gamma=X/\Gamma$, we have
\[
c_1 (\kappa_{A/\Gamma}(\pi_{\Gamma_0/\Gamma},\mathds{1}))^2 \le \lambda_0(M) \le c_2 (\kappa_{A/\Gamma}(\pi_{\Gamma_0/\Gamma},\mathds{1}))^2,
\]
where $A/\Gamma$ denotes the projection of $A$ to the quotient $\Gamma_0/\Gamma$.
These results were also generalised by Roblin and Tapie \cite{RoblinTapie}, and in the thesis of Tapie, relating the difference $\lambda_0(M_0)-\lambda_0(M)$ to the bottom of the spectrum of a combinatorial Laplacian (which is in turn related to the Kazhdan distance).

A consequence of these spectral results is that, for any family of normal subgroups $\mathcal{N}$ of $\Gamma_0$, we have 
\[
\inf_{\Gamma\in \mathcal{N}}\lambda_0(X/\Gamma)= 0 \text{ if and only if }\inf_{\Gamma\in \mathcal{N}}\kappa_{A/\Gamma}(\pi_{\Gamma_0/\Gamma},\mathds{1})= 0.
\]

For real hyperbolic space $\bm{H}^n_{\mathbb{R}}$, the spectral geometry and dynamics are related by the celebrated Patterson-Sullivan theorem \cite{Sullivan}. We have that,
\[
\lambda_0(X/\Gamma) = \left\{ \begin{array}{cc}
\delta_\Gamma(n-\delta_\Gamma) & \text{ if } \delta_\Gamma \ge \frac{n}{2}
\\
\frac{n^2}{4} & \text{ if } \delta_\Gamma \le \frac{n}{2}.
\end{array}
\right.
\]
There are analogous statements for the other noncompact rank 1 symmetric spaces. However these results fail to extend to spaces which do not satisfy such strong symmetry hypotheses. 

We now return to the setting of the introduction: $X$ is a pinched Hadamard manifold and $\Gamma_0$ is a (torsion-free) convex cocompact group of isometries. In this context, various authors have developed more dynamical methods to obtain an analogue of the amenability dichotomy of Brooks. With the hypotheses we have given $X$ and $\Gamma_0$, it was first showed by Roblin \cite{Roblin} that if $\Gamma_0/\Gamma$ is amenable, then $\delta_{\Gamma_0}=\delta_\Gamma$; and recently Dougall and Sharp \cite{DougallSharp} have shown the converse, that $\delta_{\Gamma_0}=\delta_\Gamma$ implies that $\Gamma_0/\Gamma$ is amenable. The difference in techniques is notable: Roblin's result was obtained by analysing Patterson-Sullivan measures on the boundary, whereas Dougall and Sharp exploit the symbolic dynamics for the geodesic flow -- it is this latter approach that we extend. Another important reference (that is key to \cite{DougallSharp}) is that of Stadlbauer \cite{Stadlbauer}, who obtained the equivalence in the setting of $X=\bm{H}^n_{\mathbb{R}}$ and $\Gamma_0$ essentially free, and whose techniques we discuss later.

\section{Subshifts of finite type, transfer operators and group extensions}\label{ssft}

For a finite alphabet $\mathcal{W}=\left\{ 1,\ldots , k \right\}$, we can give rules governing when two letters in the alphabet can be concatenated in terms of a $k\times k$ matrix $A$ with entries $0$ or $1$. Namely, for $i,j\in\mathcal{W}$, the concatenation $ij$ is said to be \emph{admissible} if $A(i,j)=1$. In this way, the set of admissible words of length $n$, $\mathcal{W}^n$, is the collection of concatenations $w=x_0\cdots x_{n-1}$, where $x_0,\ldots ,x_{n-1}\in \mathcal{W}$ and $A(x_i,x_{i+1})=1$ for all $i=0,\cdots, n-2$. Extending this to one-sided infinite words, define the (one-sided) shift space to be
\[
\Sigma^+ = \left\{ x_0x_1\cdots  \in \mathcal{W}^{\mathbb{Z}^+} : \forall i\in \mathbb{Z}^+,\, A(x_i,x_{i+1})=1 \right\} .
\]
The two-sided shift space $\Sigma$ is defined analogously by
\[
\Sigma=
\left\{ \cdots x_{-1}x_0x_1\cdots  \in \mathcal{W}^{\mathbb{Z}} : \forall i\in \mathbb{Z},\, A(x_i,x_{i+1})=1 \right\} .
\]
For brevity, we make the following definitions for the two-sided space $\Sigma$. However, they pass to $\Sigma^+$ by the canonical projection $\Sigma \to \Sigma^+$, given by forgetting past (negative) coordinates.

Write $x$ to denote an element 
 of $\Sigma$, and write $x^i$ for the sequence element at index $i$; in this way $x=(x^i)_{i\in \mathbb{Z}}$. Similarly, for an element $w\in\mathcal{W}^n$, we write $w^i$ to denote the $i$th element in the concatenation.
There is a natural dynamical system, $\sigma : \Sigma\to \Sigma$ 
called the \emph{shift map}, with defining property $\sigma(x)^i = x^{i+1}$. Together, we call the pair $(\Sigma,\sigma)$ a \emph{subshift of finite type}, or in some literature, a \emph{topological Markov chain}.

There is a natural topology with basis consisting of cylinder sets 
\[
[w]_j^k:= \left\{ x\in \Sigma : \forall j \le i \le k, \, x^i=w^{i-j}\right\},
\]
for any $j,k\in\mathbb{N}$, and $w\in\mathcal{W}^{k-j+1}$. For the one-sided shift space $\Sigma^+$, we often write $[w]=[w]_0^{n-1}$, where $w\in \mathcal{W}^n$. The topology is metrizable: for any $0<\theta<1$ define the metric $d_\theta$ by
\[
d_\theta(x,y) = \theta^{\inf\left\{|i|\,:\, x^i\ne y^i\right\}}
\]
and $d_\theta(x,x)=0$.

We always assume that $A$ is \emph{aperiodic}, that is to say there is some $N>0$ for which $A^N(i,j)>0$ for all $i,j\in\left\{1,\ldots k\right\}$. We call the minimal $N$ the \emph{aperiodicity constant}. The assumption that $A$ is aperiodic is equivalent to  $\sigma$ being \emph{topologically mixing}, i.e. for any non-empty open sets $U,V\subset \Sigma$ there is $N$ for which $\sigma^{-n}(U)\cap V\ne\varnothing$ for all $n\ge N$.

The \emph{H\"{o}lder continuous functions} $f: \Sigma \to \mathbb{R}$ are defined by the existence of $0<\alpha\le 1$ (a \emph{H\"{o}lder exponent}) and $C$ such that for any $x,y\in \Sigma$, 
\[
|f(x)-f(y)|\le Cd_\theta(x,y)^\alpha.
\]
Notice that replacing $\theta$ by $\theta^{1/\alpha}$ gives that $f$ has H\"{o}lder exponent $1$ in this new metric. We will later fix a H\"{o}lder continuous $r$, and then assume that the metric is chosen to give $r$ H\"{o}lder exponent $1$. 

For a strictly positive H\"{o}lder continuous function $r:\Sigma\to \mathbb{R}$, define the \emph{suspension space} $\Sigma_r$ (with \emph{suspension} $r$) by
\[
\Sigma_r = \Sigma\times \mathbb{R} / \sim_r,
\]
where $\sim_r$ is the equivalence relation $(x,s) \sim_r (\sigma x, s-r(x))$. We define the suspension flow $\sigma^t_r:\Sigma_r \to \Sigma_r$, locally by $\sigma^{t}_r(x,s)= (x, s+t)$.
The \emph{pressure} $P(f,\sigma)$ of a H\"{o}lder continuous $f: \Sigma\to \mathbb{R}$
 is defined to be
\[
P(f,\sigma)= \lim_{n\to\infty} \frac{1}{n}\log\sum_{\substack{x\in \Sigma :\\ \sigma ^n x=x}} e^{f^n(x)}.
\]

We now specialise to the one-sided shift space $\Sigma^+$.
For a H\"{o}lder continuous $r:\Sigma^+\to \mathbb{R}$, we define the \emph{transfer operator} $L_r: C(\Sigma^+,\mathbb{R})\to C(\Sigma^+,\mathbb{R})$ by
\[
L_r f(x) = \sum_{\substack{y\in \Sigma^+ :\\ \sigma y = x}} e^{r(y)}f(y),
\]
where $C(\Sigma^+,\mathbb{R})$ is the Banach space of continuous functions with the supremum norm $\|\cdot\|_\infty$. We write $\text{spr}(L_r)$ for the spectral radius of $L_r$ in this Banach space, omitting explicit reference to the space. We find better spectral properties for $L_r$ when we restrict to the smaller Banach space of H\"{o}lder continuous functions. Write $F^+_\theta$ for the functions $f:\Sigma^+\to\mathbb{R}$ that are Lipschitz (have H\"{o}lder exponent $1$) in the $d_\theta$ metric. Define the semi-norm
\[
|f|_\theta=\sup_{n\in\mathbb{N}}\sup_{\substack{x,y :\\  x^i=y^i,\, |i|\le n}}\frac{|f(x)-f(y)|}{\theta^n}.
\]
Then $(F_\theta,\|\cdot\|_\theta)$ is a Banach space with the norm $\|\cdot\|_\theta = \|\cdot \|_\infty + |\cdot |_\theta$. By the Ruelle Perron Frobenious theorem \cite{PP}, $L_r$ has a simple, isolated, maximal eigenvalue at $e^{P(r,\sigma)}$. 
 
For a countable group $G$, define the \emph{group extension (with skewing function $\psi:\Sigma^+\to G$)}, $T=T_\psi : \Sigma^+\times G\to \Sigma^+\times G$, to be the product space $\Sigma^+\times G$ together with dynamical system 
\[
T(x,g) = (\sigma x,g(\psi(x))^{-1})
\]
(Note that in \cite{DougallSharp} the group extension by $\psi$ is defined to be $T(x,g)=(x,g\psi(x))$, and so to translate to the present terminology we need to take the inverse $\psi^{-1}$. We have chosen this convention so as to simplify the expression for the transfer operator.) In the following $\psi$ is always assumed to depend only on one letter. In this way, we can think of $\psi$ as a function $\psi: \mathcal{W} \to G$. Moreover, for every $n\in\mathbb{N}$, we define $\psi^n : \Sigma^+ \to G$, $\psi^n(x)=\psi(x^0)\cdots \psi(x^{n-1})$, and write $\psi^n: \mathcal{W}^n\to G$, $\psi^n(w)=\psi(w^0)\cdots \psi(w^{n-1})$. 

For $r:\Sigma^+\to \mathbb{R}$, there is a unique $\widetilde{r}:\Sigma^+\times G\to \mathbb{R}$ such that $\widetilde{r}(x,g)=r(x)$. We therefore dispense with the cumbersome tilde, and simply write this function as $r:\Sigma^+\times G\to \mathbb{R}$. 
Define the \emph{group extended transfer operator} $\mathcal{L}_r$ pointwise by 
\[
\mathcal{L}_{r} f(x,g) = \sum_{\substack{(y,g^*)\in \Sigma^+\times G :\\  T (y,g^*)=(x,g)}} e^{r(y)}f(y,g^*) = \sum_{\substack{y\in \Sigma^+ :\\ \sigma y = x}} e^{r(y)}f(y,g\psi(y)),
\] 
and so
\[
\mathcal{L}^n_{r} f(x,g) = \sum_{\substack{(y,g^*)\in \Sigma^+\times G :\\  T^n (y,g^*)=(x,g)}} e^{r^n(y)}f(y,g^*) = \sum_{\substack{y\in \Sigma^+ :\\ \sigma^n y = x}} e^{r(y)}f(y,(g\psi(\sigma^{n-1}y)\cdots\psi(y)).
\]

Define the Banach space $(\mathcal{C}^\infty, \|\cdot \|)$ by
\[
\mathcal{C}^\infty = \left\{ f\in C(\Sigma^+\times G, \mathbb{R}) : \|f\|<\infty \right\} ,
\]
\[
\|f\| = \sqrt{\sum_{g\in G} \sup_{x\in\Sigma^+} |f(x,g)|^2 }.
\]
Then $\mathcal{L}_{r} : \mathcal{C}^\infty \to \mathcal{C}^\infty$ is a bounded operator. 
We will always take the spectral radius $\text{spr}(\mathcal{L}_r)$ with respect to the $\mathcal{C}^\infty$ norm, and continue to omit reference to the space.

Following Sarig, \cite{Sarig}, define the \emph{Gurevi\v{c} pressure}  $P_{\mathrm{Gur}}(f,T_{\psi})$ of a H\"{o}lder continuous $f: \Sigma^+\times G \to \mathbb{R}$ by
\[
P_{\mathrm{Gur}}(f,T_{\psi})= \limsup_{n\to\infty} \frac{1}{n}\log\sum_{\substack{\sigma ^n x=x : \\ \psi^n(x)=e_G}} e^{f^n(x)},
\]
where $e_G$ is them identity of $G$.

In general we have that if $T_\psi$ is topologically transitive then $P_{\mathrm{Gur}}(r,T_{\psi})\le\log\text{spr}(\mathcal{L}_r)\le  \log\text{spr}(L_r)=P(r,\sigma)$. If we assume in addition that the pair $(\psi,r)$ is \emph{weakly symmetric}, in the following sense, then we have that $ P_{\mathrm{Gur}}(r,T_{\psi})=\log\text{spr}(\mathcal{L}_r)$ \cite{Jaerisch}. We say an involution $\dag$ on $\mathcal{W}$ is \emph{weakly symmetric with respect to $r$} if there are real numbers $D_n$ such that $D_n^{1/n}\to 1$ and
\[
\sup_{x\in [w],y \in[w^\dag]}\frac{e^{r^n(x)}}{e^{r^n(y)}}\le D_n
\]
for every $w\in \mathcal{W}^n$, where $w^\dag:= (w^{n-1})^\dag\cdots (w^0)^\dag$.
We then say that the pair $(\psi,r)$ is \emph{weakly symmetric} if there is an involution $\dag$ such that $\psi(v^\dag) = \psi(v)^{-1}$ for all $v\in \mathcal{W}$, and such that $\dag$ is weakly symmetric with respect to $r$.

It will be useful to extend the definition of weak symmetry to $\Sigma$. We say that $\dag$ is symmetric with respect to $r: \Sigma\to \mathbb{R}$ if there is some $\beta(n)$ with $\beta(n)/n\to 0$ as $n\to \infty$, such that $|r^n(x)-r^n(x^\dag)|\le \beta(n)$ for any $x\in \Sigma$ with $\sigma^nx=x$, and $x^\dag\in\Sigma$ defined by $(x^\dag)^i = (x^{-i})^\dag$. In section \ref{geometry} we show that in the one-sided case $\Sigma^+$, the two definitions are equivalent (once we interperet $x^\dag\in\Sigma^+$ as $(x^\dag)^i = (x^{kn-i})^\dag$, for $i=0,\ldots, n-1$ and $k\in\mathbb{N}$).

\begin{proposition}[Stadlbauer\cite{Stadlbauer}\label{ThmStadl}]
Assume $T_\psi: \Sigma^+\times G \to \Sigma^+\times G$ is transitive. If $G$ is non-amenable, then $\mathrm{spr}(\mathcal{L}_r)<\mathrm{spr}(L_r)$. Assuming that $(\psi,r)$ is weakly symmetric, the converse holds: if $G$ is amenable, then $P_{\mathrm{Gur}}(r,T_{\psi})=P(r,\sigma)$
\end{proposition}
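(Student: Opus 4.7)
The plan is to pass through a family of twisted transfer operators. For each unitary representation $\rho : G \to \mathcal{U}(\mathcal{H})$, define
\[
L_{r,\rho} f(x) = \sum_{\sigma y = x} e^{r(y)} \rho(\psi(y)) f(y)
\]
on continuous $\mathcal{H}$-valued functions on $\Sigma^+$. Identifying $\mathcal{C}^\infty$ fibrewise with $\ell^2(G)$, the operator $\mathcal{L}_r$ coincides with $L_{r,\pi_G}$ for the right regular representation $\pi_G$, so $\mathrm{spr}(\mathcal{L}_r) = \mathrm{spr}(L_{r,\pi_G})$, while $L_{r,\mathds{1}} = L_r$ has spectral radius $e^{P(r,\sigma)}$. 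The proposition then reduces to comparing $\mathrm{spr}(L_{r,\rho})$ across representations.

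For the non-amenable direction, I would prove a quantitative Kazhdan-type bound of the form
\[
\mathrm{spr}(L_{r,\rho}) \le \mathrm{spr}(L_r)\bigl(1 - c\,\kappa_A(\rho,\mathds{1})^2\bigr)
\]
for some $c>0$ and finite symmetric $A \subset G$ determined by the image of $\psi$. Iterating $L_{r,\rho}$ and applying Cauchy--Schwarz fibrewise, the cancellation in $\sum e^{r^n(y)} \rho(\psi^n(y)) v$ is controlled by how far the vectors $\rho(\psi^n(y))v$ deviate from $v$, which the Kazhdan distance quantifies uniformly. Hulanicki's theorem says non-amenability of $G$ is equivalent to $\kappa_A(\pi_G,\mathds{1}) > 0$, so this yields $\mathrm{spr}(\mathcal{L}_r) < \mathrm{spr}(L_r)$.

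For the amenable converse, I would build approximate eigenfunctions of $\mathcal{L}_r$ from F{\o}lner sets. Given $\epsilon>0$, choose an $(A,\epsilon)$-invariant $E \subset G$ with $A = \psi(\mathcal{W}) \cup \psi(\mathcal{W})^{-1}$, and let $\phi$ be the Ruelle--Perron--Frobenius eigenfunction of $L_r$. The test function $f_E(x,g) = \phi(x)\mathds{1}_E(g)/\sqrt{\#E}$ lies in $\mathcal{C}^\infty$, and a direct estimate shows $\|\mathcal{L}_r^n f_E\| \ge e^{nP(r,\sigma)}(1 - o(1))\|f_E\|$ once $E$ is taken sufficiently invariant relative to $n$, so $\mathrm{spr}(\mathcal{L}_r) = \mathrm{spr}(L_r)$. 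To upgrade this spectral equality to the Gurevi\v{c} pressure statement $P_{\mathrm{Gur}}(r,T_\psi) = P(r,\sigma)$, I would invoke Jaerisch's result already cited in the excerpt, which identifies $\log \mathrm{spr}(\mathcal{L}_r)$ with $P_{\mathrm{Gur}}(r,T_\psi)$ under weak symmetry. Alternatively, one can argue directly: given any admissible word $w$ of length $n$, the involution $w^\dag$ together with the aperiodicity constant produces a closed orbit of length $\approx 2n$ with trivial $G$-holonomy and $r$-weight close to $2r^n(w)$, and the weak symmetry bound $\beta(n)/n \to 0$ ensures that summing over $w$ recovers the full growth rate $P(r,\sigma)$.

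The main obstacle is the upgrade from $\mathrm{spr}(\mathcal{L}_r) = \mathrm{spr}(L_r)$ to $P_{\mathrm{Gur}}(r,T_\psi) = P(r,\sigma)$ in the amenable case, and this is exactly where weak symmetry must enter. The constraint $\psi^n(x) = e_G$ is severe, and without an involution producing such orbits from arbitrary admissible words, the Gurevi\v{c} pressure could a priori fail to attain the spectral value. The Kesten-style argument for the non-amenable direction is comparatively routine, but still demands care in matching the $\mathcal{C}^\infty$-norm iteration of $\mathcal{L}_r$ against the pressure-level growth of $L_r$ via an appropriate fibrewise Cauchy--Schwarz.
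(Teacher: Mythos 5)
Two preliminary remarks. First, this proposition is not proved in the paper: it is quoted from Stadlbauer, with a remark explaining why the argument transfers from $\mathcal{H}^\infty$ to $\mathcal{C}^\infty$ (the spectrum is attained on the functions constant in the $\Sigma^+$-coordinate, a copy of $\ell^2(G)$), and with Jaerisch cited for $P_{\mathrm{Gur}}(r,T_\psi)=\log\mathrm{spr}(\mathcal{L}_r)$ under transitivity and weak symmetry, and for $\mathrm{spr}(\mathcal{L}_r)=\mathrm{spr}(L_r)$ under amenability alone. Against that background, your amenable direction is essentially fine: the F{\o}lner test functions $f_E(x,g)=\phi(x)\mathds{1}_E(g)/\sqrt{\#E}$, with $E$ chosen $(\epsilon/\#A^n,A^n)$-invariant for each fixed $n$, do give $\|\mathcal{L}_r^nf_E\|\ge c\,e^{nP(r,\sigma)}\|f_E\|$ and hence $\mathrm{spr}(\mathcal{L}_r)=\mathrm{spr}(L_r)$, and invoking the quoted identification of $\log\mathrm{spr}(\mathcal{L}_r)$ with the Gurevi\v{c} pressure closes that half. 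Your ``alternative direct argument'', however, is wrong as stated: closing up $w\,u_1\,w^\dag\,u_2$ gives holonomy $\psi^n(w)\psi^p(u_1)\psi^n(w)^{-1}\psi^p(u_2)$, not $e_G$; the connecting words contribute elements that get conjugated by unboundedly many $g=\psi^n(w)$, so you do not obtain holonomy-trivial orbits word by word. This is precisely why the genuine proofs (Stadlbauer, and the paper's own Theorem \ref{transferop}(\ref{transferop1})) symmetrise into random-walk measures $\mu_n$ on $G$, compare convolution powers with sums over words whose $\psi$-image is trivial, and use Kesten's criterion, with weak symmetry only controlling the distortion constants $D_n$.

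The non-amenable direction is where the real gap lies. The sentence ``the cancellation in $\sum e^{r^n(y)}\rho(\psi^n(y))v$ is controlled by how far the vectors $\rho(\psi^n(y))v$ deviate from $v$, which the Kazhdan distance quantifies uniformly'' skips the entire mechanism. Positivity of $\kappa_A(\rho,\mathds{1})$ says that for each unit vector $v$ there is \emph{some} $a\in A$ (depending on $v$) with $|\rho(a)v-v|\ge\kappa$; to convert this into decay of $\left|\sum_{\sigma^ny=x}e^{r^n(y)}\rho(\psi^n(y))v\right|$ you must exhibit, among the preimages of the given $x$, pairs of admissible words of comparable $e^{r^n}$-weight whose $\psi$-images differ by that particular $a$ (in fact by arbitrary elements of a fixed finite set, cf.\ Lemma \ref{nonamen}), uniformly in $x$ and $n$. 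Nothing in Cauchy--Schwarz produces this; it is exactly what transitivity is used for, via first-return words to a fixed cylinder in Stadlbauer's proof, and via (LVR) and the paths construction (Lemma \ref{paths}) together with the block-induction in Section 9 of this paper. Relatedly, the clean bound $\mathrm{spr}(L_{r,\rho})\le\mathrm{spr}(L_r)\bigl(1-c\,\kappa_A(\rho,\mathds{1})^2\bigr)$ with $A$ read off from the image of $\psi$ and $c$ independent of the representation is not something one gets for free: it is essentially Theorem \ref{transferop}(\ref{transferop2}), which this paper only establishes under the additional (LVR) hypothesis, precisely because Stadlbauer's constants depend on the first-return structure of the extension. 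For the qualitative statement of the proposition (a single $G$, $\rho=\pi_G$) your plan can be completed, but only after adding the word-realisation step and the resulting iteration, which your sketch currently omits.
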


\begin{remark}
The statement of Stadlbauer's theorem is actually for the Banach space
\[
\mathcal{H}^\infty = \left\{ f: \Sigma^+\times G\to \mathbb{R} : f(\cdot,g)\in L^1(\Sigma^+,\mu_r) \text{ for all } g\in G, \|f\|_{\mathcal{H}^\infty}<\infty \right\},
\]
\[
\|f\|_{\mathcal{H}^\infty} = \sqrt{\sum_{g\in G} \left(\int |f(x,g)|d\mu_r\right)^2 }.
\]
where $\mu_r$ is the equilibrium state for $r$.
However, the proof only uses the fact that the spectrum is attained on the subset
\[
\left\{ f\in \mathcal{H}^\infty: f(x,g)=f(y,g) \text{ for all } x,y\in\Sigma^+, g\in G\right\}
\]
which is isometrically isomorphic to $\ell^2(G)$,
and so the statment is true for any Banach space with this property (we will show that $\mathcal{C}^\infty$ has this property later).
Moreover, Stadlbauer considers countable alphabets under a certain finiteness condition, namely having \emph{big images and pre-images}. As the application we have in mind is for a finite alphabet we limit ourselves to the finite alphabet case.
\end{remark}
\begin{remark}
Jaerisch \cite{Jaerisch} shows that we have the conclusion $\mathrm{spr}(\mathcal{L}_r)=\mathrm{spr}(L_r)$ if $G$ is amenable, under no symmetry hypothesis.
\end{remark}

In this paper we study a family of group extensions which can be seen as quotients of a fixed group extension $\psi: \Sigma^+\to G$. For each $H\unlhd G$, write $\psi_H(x)$ for the coset of $G/H$ given by $\psi(x)$. 
In this way $T_{\psi_{H}} : \Sigma^+\times G/H \to \Sigma^+\times G/H$ is a group extension with skewing function $\psi_H$.
For notational convenience, write 
$\mathcal{L}_{r,H}$ for the transfer operator given by $r$ and $T_{\psi_{H}}$; and write $\mathcal{C}_H^\infty$ for the Banach space assosciated to 
$\mathcal{L}_{r,H}$.
We also consider a family of transfer operators $\mathcal{L}_{r_s,H}$ where $s\mapsto r_s\in F_\theta$, $s\in[-1,1]$, is continuous in the $\|\cdot\|_\theta$ topology.

By Proposition \ref{ThmStadl}, if $H\unlhd G$ with $G/H$ non-amenable and $T_{\psi_{H}}: \Sigma^+\times G/H\to \Sigma^+\times G/H$ is transitive, then $\text{spr}(\mathcal{L}_{r,H})<\text{spr}(L_r)$. The proof in \cite{Stadlbauer} finds an upper bound for $\text{spr}(\mathcal{L}_{r,H})$ that depends on the first return to a cylinder under $T_{\psi_H}$. As this bound does not suffice for our needs, we introduce a new condition on $\psi$ that removes this dependency. 
\begin{definition}
We say that $(\Sigma^+,G,\psi)$ satisfies \emph{linear visibility with remainder $\mathrm{(LVR)}$} if
there exists a map $\chi: G \to \bigcup_{n=1}^\infty \mathcal{W}^n$ with the following properties:  
\begin{itemize}
\item
(visibility with remainder) there exists a finite set $\mathcal{R}\subset G$ such that for every $g\in G$, there are $r_1,r_2\in \mathcal{R}$ with $\psi^{k_g}(\chi(g))=r_1gr_2$, where $k_g$ is the length of the word $\chi(g)$; 
\item
(linear growth) there exists $L$ such that for any finite collection $g_1,\ldots, g_r\in G$, writing $g=g_1\cdots g_r$, we have that $k_g\le L(\sum_{i=1}^r k_{g_i})$, where $k_g$ is the length of $\chi(g)$, and $k_{g_i}$ the length of $\chi(g_i)$, for each $i$.
\end{itemize}
\end{definition}

\section{Statement of Main Results}
We restate our theorem about the behaviour of the critical exponent.
\begin{reptheorem}{manifold}
Let $\Gamma_0$ be a convex cocompact group of isometries of a pinched Hadamard manifold $X$, and let $A$ be a finite generating set for $\Gamma_0$. For any collection $\mathcal{N}$ of normal subgroups of $\Gamma_0$, we have
\[
\sup_{\Gamma \in \mathcal{N}} \delta_{\Gamma}<\delta_{\Gamma_0} \, \text{ if and only if } \, \inf_{\Gamma \in \mathcal{N}} \kappa_{A/\Gamma}(\pi_{\Gamma_0/\Gamma},\mathds{1}) > 0.
\]
\end{reptheorem}

As in \cite{DougallSharp}, the proof of the theorem uses the dynamics of the geodesic flow $\phi^t_\Gamma :SM_\Gamma \to SM_\Gamma$, for $M_\Gamma=X/\Gamma$, and in turn the dynamics of group extended shift spaces. Therefore it will be crucial to prove the following theorem about the spectrum of group extended transfer operators. This extends the results of Stadlbauer \cite{Stadlbauer}. 

As in the introduction, fix $\sigma: \Sigma^+\to \Sigma^+$ a topologically mixing subshift of finite type, and $r: \Sigma^+ \to \mathbb{R}$ a potential. Fix $G$ a countable group and $\psi : \Sigma^+ \to G$ constant on cylinders of length 1.

\begin{theorem}\label{transferop}
Let $A$ be a finite generating set for $G$, and let $\mathcal{N}$ be a collection of normal subgroups of $G$.
\begin{enumerate}[(i.)]
\item\label{transferop1} Assume that $(\psi,r)$ is weakly symmetric. Then
\[
\inf_{H\in \mathcal{N}} \kappa_{A/H}(\pi_{G/H},\mathds{1}) = 0 \implies \sup_{H\in \mathcal{N}} P_{\mathrm{Gur}}(r,T_{\psi_{H}}) = P(r,\sigma).
\]
\item\label{transferop2} Assume that $(\Sigma^+,G,\psi)$ satisfies $\mathrm{(LVR)}$. Then
\[
\inf_{H \in \mathcal{N}}\kappa_{A/H}(\pi_{G/H},\mathds{1})>0
\implies
\sup_{H \in \mathcal{N}}
\mathrm{spr}(\mathcal{L}_{r,H}) < \mathrm{spr}(L_r).
\]
\item \label{transferop3} In addition, in case (\ref{transferop2}) suppose that $s\mapsto r_s$ is continuous (in the $\|\cdot\|_\theta$ topology) for $s\in [-1,1]$. 
Then
\[
\inf_{H \in \mathcal{N}}\kappa_{A/H}(\pi_{G/H},\mathds{1})>0
\implies
\sup_{H \in \mathcal{N}, s\in [-\delta,\delta]}
\mathrm{spr}(\mathcal{L}_{r_s,H}) < \mathrm{spr}(L_{r_0}),
\]
for some $\delta>0$.
\end{enumerate} 
\end{theorem}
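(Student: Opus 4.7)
The overall strategy is to use the standard observation (already hinted at in the remark after Proposition \ref{ThmStadl}) that the spectrum of $\mathcal{L}_{r,H}$ on $\mathcal{C}_H^\infty$ is realised on the closed subspace of functions independent of $x\in \Sigma^+$, which is isometrically isomorphic to $\ell^2(G/H)$. On this subspace, iterates of $\mathcal{L}_{r,H}$ act as weighted averages involving $\pi_{G/H}(\psi^n(\cdot))$ against the Perron--Frobenius data of $L_r^n$, so the spectral radius reduces to a problem about the right regular representation $\pi_{G/H}$ evaluated on symbolic loops, and can be related directly to Kazhdan distances.

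For part (\ref{transferop1}), take a sequence $H_k \in \mathcal{N}$ with $\kappa_{A/H_k}(\pi_{G/H_k}, \mathds{1}) \to 0$, and let $v_k \in \ell^2(G/H_k)$ be corresponding almost-$A$-invariant unit vectors. Since $\psi(\mathcal{W})$ is a finite subset of $G$, each $\psi(w)$ has bounded length in $A \cup A^{-1}$, so $v_k$ is also almost invariant under $\pi_{G/H_k}(\psi(w))$ for every $w \in \mathcal{W}$. Forming the test function $F_k(x,g) = v_k(g) h_0(x)$ with $h_0$ the leading H\"older eigenfunction of $L_r$, the almost invariance causes the group cocycle to contribute $1 + o(1)$ to $\langle \mathcal{L}_{r,H_k}^n F_k, F_k\rangle$ for each fixed $n$. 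Hence $\log \text{spr}(\mathcal{L}_{r,H_k}) \to P(r, \sigma)$; weak symmetry and Proposition \ref{ThmStadl} identify $\log \text{spr}(\mathcal{L}_{r,H_k})$ with $P_{\mathrm{Gur}}(r, T_{\psi_{H_k}})$, giving the assertion.

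Part (\ref{transferop2}) is the core and the main obstacle. Stadlbauer's proof of the gap $\text{spr}(\mathcal{L}_{r,H}) < \text{spr}(L_r)$ yields a bound depending on first-return times of the induced dynamics on a cylinder inside $\Sigma^+ \times G/H$, which are not a priori uniformly controlled across $\mathcal{N}$. The $\mathrm{(LVR)}$ hypothesis is introduced precisely to supply the missing uniformity, providing a reservoir of symbolic loops of controlled length realising every element of $G$ up to the fixed remainders in $\mathcal{R}$. The plan is: (a) express $\|\mathcal{L}_{r,H}^{2n}\|^2$ as the operator norm on $\ell^2(G/H)$ of an operator of the form $\sum_g \mu_n(g)\pi_{G/H}(g)$, where $\mu_n$ is the $e^{r^{2n}}$-weighted loop measure; (b) use visibility to show that $\mu_n$, symmetrised using the involution supplied by weak symmetry of $(\psi,r)$, charges a bounded-remainder neighbourhood of each $a \in A$ with mass at least a constant $c_0>0$ that depends only on the $\mathrm{(LVR)}$ data; (c) deduce, via $\|\pi(a)v-v\|^2 = 2-2\Re\langle \pi(a)v,v\rangle$ together with the finiteness of $A$, an operator norm bound
\[
\left\|\sum_g \mu_n(g)\pi_{G/H}(g)\right\|_{\ell^2\to \ell^2} \le \mu_n(G)\left(1-c\,\kappa_{A/H}(\pi_{G/H},\mathds{1})^2\right),
\]
with $c$ independent of $H$ and $n$; and (d) use linear growth in $\mathrm{(LVR)}$ to ensure the comparison between reservoir length and ordinary $A$-word length does not blow up with $H$, so that taking $n$-th roots yields the required uniform spectral gap.

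For part (\ref{transferop3}) the argument is perturbative. The gap from part (\ref{transferop2}) depends on $r$ only through the H\"older norm $\|r\|_\theta$ (entering through the distortion bounds on $r^{2n}$) and the leading eigendata of $L_r$; the $\mathrm{(LVR)}$ data and the right regular representations $\pi_{G/H}$ are independent of $r$. Since $s\mapsto r_s$ is continuous in $\|\cdot\|_\theta$, and since $\text{spr}(L_{r_s})$ is a simple isolated eigenvalue depending analytically on $s$ by the Ruelle--Perron--Frobenius theorem, the uniform gap for $r_0$ persists on some neighbourhood $[-\delta,\delta]$ of $0$, uniformly over $H\in\mathcal{N}$.
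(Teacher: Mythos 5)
Your outline for part (\ref{transferop3}) matches the paper, but parts (\ref{transferop1}) and (\ref{transferop2}) each contain a genuine gap. In (\ref{transferop1}), the inference ``the group cocycle contributes $1+o(1)$ to $\langle \mathcal{L}_{r,H_k}^n F_k,F_k\rangle$ for each fixed $n$, hence $\log \mathrm{spr}(\mathcal{L}_{r,H_k})\to P(r,\sigma)$'' does not follow. For a fixed $H_k$ the almost invariance of $v_k$ degrades along the cocycle (the error grows roughly linearly in $n$), so your estimate is only available for $n$ up to about $1/\kappa_{A/H_k}$; since $\mathrm{spr}(\mathcal{L})=\inf_n\|\mathcal{L}^n\|^{1/n}$, a lower bound on $\|\mathcal{L}^n\|$ or on a matrix coefficient for finitely many $n$ gives no lower bound on the spectral radius, nor on the $\limsup_{n\to\infty}$ defining $P_{\mathrm{Gur}}(r,T_{\psi_{H_k}})$. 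Moreover, the identification $P_{\mathrm{Gur}}(r,T_{\psi_H})=\log\mathrm{spr}(\mathcal{L}_{r,H})$ you invoke from Proposition \ref{ThmStadl}/Jaerisch requires $T_{\psi_H}$ to be transitive, which is not a hypothesis of (\ref{transferop1}). The paper's proof avoids both problems by transferring in the opposite direction: weak symmetry is used to build symmetric measures $\mu_{2n}$ on $G$ out of weighted word counts, and the associated \emph{self-adjoint} operators $M_{2n,H}$ on $\ell^2(G/H)$ have $\mathrm{spr}(M_{2n,H})\ge 1-\epsilon^2/2$ from a \emph{single} almost-invariant vector, with no iteration; the $k\to\infty$ asymptotics are then taken on the dynamical side, where the $k$-fold convolution return sums are dominated by partition sums controlled by $P_{\mathrm{Gur}}(r,T_{\psi_H})$ and compared with $B_n\approx e^{2nP(r,\sigma)}$.

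In (\ref{transferop2}), steps (b)--(c) are asserted rather than proved, and they are exactly where the difficulty lies. First, (\ref{transferop2}) does not assume weak symmetry, so ``the involution supplied by weak symmetry'' is unavailable; any symmetrisation would have to come from considering $\mathcal{A}^*\mathcal{A}$ with $\mathcal{A}=\sum_{\sigma^n y=z}e^{r^n(y)}\rho_H(\psi^n(y))$, which you do not address. More seriously, (LVR) only realises each $a\in A$ as $\psi^{k_a}(\chi(a))=r_1ar_2$ with $r_1,r_2\in\mathcal{R}$, and equalising word lengths forces extra padding, so the loop measure charges perturbed elements of the form $q_1aq_2$ rather than $a$ itself (and if the distinguishing subword is not placed adjacent to the base point $z$ one even gets conjugates of such elements). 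A unit vector almost invariant under all these perturbed elements need not be almost invariant under $A$, so $\kappa_{A/H}$ does not directly give $\bigl\|\sum_g\mu_n(g)\pi_{G/H}(g)\bigr\|\le\mu_n(G)\bigl(1-c\,\kappa^2\bigr)$ with $c$ uniform in $H$ and $n$. Handling the remainders and padding uniformly over $\mathcal{N}$ is the technical crux, and the paper does it by a different mechanism: for each $f\in\ell^2_+(G/H)$ it constructs, via the parallelogram law and a F\o lner-type lemma in Section \ref{auxilliary}, a set $E_m\subset(A\cup B)^m$ with $\#E_m\ge 2^m(1-\epsilon)^m$ and $\bigl\|\sum_{g\in E_m}\rho_H(g)f\bigr\|\le 2^m(1-\kappa_1)^m\|f\|$, applies this to a vector already pre-multiplied by the remainder and padding cocycles (Lemma \ref{paths}), and then runs an inductive block-contraction estimate on the cone of $x$-independent functions after the reduction of Lemma \ref{norm}; the induction over blocks is also needed to absorb the distortion constant $C>1$ appearing there, which a bound at a single scale $n$ would not overcome. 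Part (\ref{transferop3}) of your sketch is essentially the paper's argument (normalise $r_s$ and use continuity of $s\mapsto\hat r_s$ to make the constants uniform on $[-\delta,\delta]$), but it inherits the unproved core of (\ref{transferop2}).
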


\begin{remark}
If $\psi: \Sigma^+\to G$ depends on $n$-coordinates, as opposed to one, then we may still apply the conclusions of Theorem \ref{transferop}. To see this, let $\Sigma^+_n$ denote the subshift of finite type whose alphabet is given by admissible words of length $n$ for $\Sigma^+$, and with transition matrix $A_n(u,v)=1$ if and only if $u^{i+1}=v^i$ for all $i=0,\ldots, n-2$. Then $\psi$ gives rise to $\psi_n: \Sigma^+_n\to G$ depending only on one coordinate. Moreover, the statistics for the H\"{o}lder continuous functions, pressure and transfer operators pass to $(\Sigma^+_N,\psi_n)$ in the natural way. 
\end{remark}

\section{Axiom A flows and symbolic coding}
We now take a brief excursion into the theory of Smale's Axiom A flows \cite{Smale} and the symbolic coding of Bowen \cite{Bowen}.

Throughout, $f^t$ is a smooth flow on a complete Riemannian manifold $N$. 

A closed, $f^t$-invariant set $\Lambda\subset N$ is said to be \emph{hyperbolic} if 
there is a continuous, $Df^t$-invariant splitting of the tangent bundle
\[
T_{\Lambda}(N) = E^0\oplus E^s \oplus E^u
\]
and constants $\lambda,C>0$ such that
\begin{itemize}
\item
$E^0$ the line bundle tangent to the flow direction;
\item $\|Df^t v\| \le C e^{-\lambda t}\|v\|$ for all $v\in E^s$;
\item $\|Df^{-t} v\| \le C e^{-\lambda t}\|v\|$ for all $v\in E^u$;
\end{itemize}
We remark that this definition is independent of the choice of metric when $\Lambda$ is compact. 

The set $\Lambda$ is said to be a \emph{basic set} if
\begin{enumerate}
\item $\Lambda$ is compact and hyperbolic;
\item $f^t_\Lambda$ is transitive;
\item the periodic orbits for $f^t_\Lambda$ are dense in $\Lambda$; and
\item there is an open neighbourhood $U$ of $\Lambda$ such that $\bigcap_{t\in\mathbb{R}} f^t (U)=\Lambda$.
\end{enumerate}
The flow $f^t$ satisfies Smale's \emph{Axiom A} if the non-wandering set $\Omega(f)$ is a finite union of basic sets.

Let $\phi_0^t:SM_0\to SM_0$ be the geodesic flow given by $M_0=X/\Gamma_0$ where $\Gamma_0$ is convex cocompact. Then the non-wandering set $\Omega(\phi_0)$ is a basic set. (See for instance \cite[Chapter 17]{KH} in the case that $M$ is compact.)

We now describe some of the constructions relating to the theory of (hyperbolic) basic sets, which play an important role in Bowen's symbolic coding.

For $x\in \Omega(f)$ define the (strong) \emph{local stable manifold} $W_\epsilon^s(x)$ and (strong) \emph{local unstable manifold} $W_\epsilon^u(x)$ by
\[
W_\epsilon^s(x) = \left\{y\in N : d(f^t(x),f^t(y))\le \epsilon \text{ for all }t, \lim_{t\to \infty}d(f^t(x),f^t(y))=0 \right\}
\]
\[
W_\epsilon^u(x) = \left\{y\in N : d(f^{-t}(x),f^{-t}(y))\le \epsilon \text{ for all }t, \lim_{t\to \infty}d(f^{-t}(x),f^{-t}(y))=0 \right\}
\]
For small enough $\epsilon$, these sets are diffeomorphic to embedded disks of codimension $1$. These sets give us a \emph{local product structure} $[\cdot,\cdot]$. For sufficiently close $x,y$, we have that $W_\epsilon^s(x)\cap W_\epsilon(\phi_0^t(y))\ne\varnothing$ for a unique $t\in[-\epsilon,\epsilon]$, and we define $[x,y]$ to be this intersection point.

Suppose that $D^1,\ldots , D^k$ are codimension $1$ disks that form a local cross-section to the flow.
We say that $S^i\subset \mathrm{int}(D^i)\cap \Omega(f)$ is a \emph{rectangle} if $x,y\in S^i$ implies that $[x,y]= f^t z$, for some $z\in D^i$, $t\in [-\epsilon,\epsilon]$. We say that $S^i$ is \emph{proper} if $\overline{\mathrm{int}(S^i)}=S^i$, where the interior is taken relative to $D^i\cap\Omega(f)$.

Write $P$ for the Poincar\'{e} map $P: \bigcup_{i=1}^kS^i\to \bigcup_{i=1}^kS^i$. Write $W^s_\epsilon(x,S^i)$ and $W^u_\epsilon(x,S^i)$ for the projection of $W^s_\epsilon(x)$ and $W^u_\epsilon(x)$ onto $S^i$ respectively. We say that $\left\{S^1,\cdots, S^k\right\}$ is a \emph{Markov section} if
\begin{itemize}
\item $x\in \mathrm{int}(S^i)$ and $Px\in \mathrm{int}(S^j)$ implies that $P(W^s_\epsilon(x,S^i))\subset W^s_\epsilon(Px,S^j))$; and 
\item $x\in \mathrm{int}(S^i)$ and $P^{-1}x\in \mathrm{int}(S^j)$ implies that $P^{-1}(W^u_\epsilon(x,S^i))\subset W^s_\epsilon(P^{-1}x,S^j))$.
\end{itemize}

\begin{proposition}[Bowen \cite{Bowen}]
For all sufficiently small $\epsilon>0$, $f^t$ has a Markov section $\left\{S^1,\cdots, S^k\right\}$ such that $\mathrm{diam}(S^i)\le \epsilon$ for each $i$, and $\bigcup_{t\in [-\epsilon,\epsilon]}f^t(\cup_{i=1}^k S^i)= \Omega(f)$.
\end{proposition}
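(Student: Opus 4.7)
The plan is to follow Bowen's original three-step construction: build a fine transverse cover of $\Omega(f)$, extract a finite family of proper rectangles whose flow-thickenings still cover $\Omega(f)$, and then refine these rectangles so that the Markov inclusions for the Poincar\'{e} map hold.

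First I would fix $\epsilon>0$ small enough that the local product structure $[\cdot,\cdot]$ is defined on $2\epsilon$-close pairs of points in $\Omega(f)$ and that the contraction/expansion in the hyperbolicity definition is active on $W^s_\epsilon,W^u_\epsilon$. By compactness of $\Omega(f)$, I pick finitely many $x_1,\ldots,x_N\in\Omega(f)$ together with codimension-one disks $D^i$ of diameter at most $\epsilon$ transverse to the flow at $x_i$, such that the open flow boxes $\bigcup_{|t|<\epsilon/2}f^t(D^i)$ already cover $\Omega(f)$. Inside each $D^i\cap\Omega(f)$ I form the candidate rectangle $R^i=[W^u_\epsilon(x_i,D^i),W^s_\epsilon(x_i,D^i)]$, and I replace $R^i$ by the closure of its interior relative to $D^i\cap\Omega(f)$ so that it becomes proper. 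By the choice of the initial cover, the $\epsilon$-flow-thickening of $\bigcup_iR^i$ still equals $\Omega(f)$.

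The main work is refining the $\{R^i\}$ so that the Poincar\'{e} return map $P$ satisfies the Markov inclusions. In general $P(W^s_\epsilon(x,R^i))$ sits strictly inside a stable slice of the target rectangle $R^j$ (and symmetrically for $P^{-1}$ and $W^u$), so the rectangles need to be split. I would refine by itinerary: for every pair $(i,j)$ with $P(R^i)\cap R^j\ne\varnothing$, cut $R^i$ along the $P$-preimages of the stable and unstable boundaries of $R^j$, and symmetrically cut using $P^{-1}$ against the boundaries of rectangles meeting $P^{-1}(R^i)$. Using $[\cdot,\cdot]$ at each step, these cuts can be realigned so they are themselves segments of local stable or unstable manifolds, hence each refined piece is still a rectangle. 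Uniform hyperbolicity then gives that past some $n_0=n_0(\epsilon)$ the iterated boundaries are finer than $\epsilon$ in the contracted direction, so only finitely many refined pieces remain; their connected components are the sets $S^1,\ldots,S^k$. By construction, $P$ sends a local stable slice of $S^i$ into a local stable slice of the image piece, and $P^{-1}$ sends local unstable slices to local unstable slices, which is precisely the Markov property.

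The hard part will be showing that every new boundary introduced in the refinement really is a piece of a local stable or unstable manifold, since the Markov inclusion on interiors depends on this. The inductive step uses the local product structure to ``straighten'' a transverse cut into the union of a $W^s$-segment and a $W^u$-segment, together with the observation that the $P$-image of such a segment, up to its own boundary, is again a $W^s$- or $W^u$-segment. Once the Markov section is in hand, the bound $\mathrm{diam}(S^i)\le\epsilon$ follows because refinement only shrinks pieces, and the covering property $\bigcup_{|t|\le\epsilon}f^t(\cup_iS^i)=\Omega(f)$ is inherited from the initial flow-box cover.
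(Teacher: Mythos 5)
The paper does not prove this proposition at all -- it is quoted verbatim from Bowen's \emph{Symbolic dynamics for hyperbolic flows} -- so the only meaningful comparison is with Bowen's actual construction, and against that your outline has a genuine gap at the refinement step. The procedure you describe -- iteratively cutting each $R^i$ along the $P$- and $P^{-1}$-preimages of the stable and unstable boundaries of the rectangles its image meets -- does not obviously terminate, and your finiteness argument (``past some $n_0$ the iterated boundaries are finer than $\epsilon$ in the contracted direction, so only finitely many refined pieces remain'') conflates the \emph{size} of the pieces with their \emph{number}: hyperbolicity makes iterated preimages of the boundaries proliferate (typically they accumulate densely in the rectangle), so cutting along all of them yields infinitely many pieces, and stopping after $n_0$ steps leaves you with no reason why the Markov inclusions should hold for the pieces you kept. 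The Sinai--Bowen construction avoids iteration altogether: each rectangle of the pre-Markov family is subdivided \emph{in one step} according to how its stable and unstable slices cross the finitely many rectangles met by its image (the four crossing types $T^1_{jk},\dots,T^4_{jk}$), one passes to the atoms of the finite algebra these sets generate, and the real work -- absent from your sketch -- is proving that these atoms, after taking proper closures, are again rectangles and that the Markov inclusions hold on interiors, which requires controlling the set of points whose orbits meet the boundaries. In Bowen's flow paper this is in fact organized through an auxiliary subshift: one records itineraries through a ``proper family'' of sections and uses expansiveness/shadowing to define the refined rectangles from cylinder sets, which is where the Markov property genuinely comes from.

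Separately, the flow-specific difficulties that distinguish Bowen's theorem from the diffeomorphism case are waved away in your sketch. The Poincar\'{e} map $P$ on $\bigcup_i S^i$ is only partially defined, with variable return time, so one must arrange the disks $D^i$ so that returns happen in a uniformly bounded time window, that $P$ is continuous on each piece being cut, and that distinct sections do not interfere (orbit segments of length $\le\epsilon$ meeting two sections, boundary identifications under the flow); moreover the ``stable'' and ``unstable'' sets inside a section are projections along the flow of the true $W^s_\epsilon$, $W^u_\epsilon$, and your straightening-of-cuts argument needs to be carried out for these projected laminations, not for the manifolds themselves. None of this is fatal to the strategy -- it is the strategy Bowen follows -- but as written the proposal asserts rather than proves the two key points: finiteness of the refinement and the Markov property of the resulting pieces.
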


These Markov sections provide us with a `symbolic coding' for the geodesic flow. In the following, the Markov section $\left\{S^1,\cdots, S^k \right\}$ plays the role of an alphabet for a subshift of finite type $\Sigma$ with transition matrix $A$, defined by $A(i,j)=1$ if there is $x\in \mathrm{int}(S^i)$ with $Px\in \mathrm{int}(S^j)$. 

We specialise to the geodesic flow $\phi_0^t$ for the statement of the concluding proposition. We write $N_{\phi_0^t}(T)$ and $N_{\sigma_r^t}(T)$ for the number of periodic orbits of $\phi_0^t$ and $\sigma^t_r$, respectively, whose period is at most $T$. It is well known that $\delta_{\Gamma_0} = \lim_{T\to\infty} \frac{1}{T}\log N_{\phi_0^t}(T)$.

\begin{proposition}[Bowen \cite{Bowen}]\label{Bowen}
There is a mixing subshift of finite type $\sigma :\Sigma\to \Sigma$, a positive H\"{o}lder potential $r:\Sigma\to\mathbb{R}^+$ such that the suspended flow $\sigma^t_r: \Sigma^r\to \Sigma^r$ is semi-conjugate to  $\phi_0^t:\Omega(\phi_0)\to \Omega(\phi_0)$, i.e. there is a H\"{o}lder continuous $\theta: \Sigma^r\to \Omega(\phi_0)$ such that $\theta\circ \sigma_r^t = \phi_0^t\circ \theta$. Although $\theta$ is not a bijection, we have $N_{\sigma^t_r}(T)= N_{\phi_0^t}(T) + O(e^{h^\prime T})$, for some $h^\prime<\delta_{\Gamma_0}$.
\end{proposition}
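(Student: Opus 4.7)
The plan is to use the Markov section $\{S^1,\ldots,S^k\}$ provided by the previous proposition as the alphabet for the symbolic coding. First, one defines the subshift of finite type $\sigma:\Sigma\to\Sigma$ with transition matrix $A(i,j)=1$ iff $\mathrm{int}(S^i)\cap P^{-1}(\mathrm{int}(S^j))\ne\varnothing$. The roof function $r:\Sigma\to\mathbb{R}^+$ is taken to be the return time of the flow to $\bigcup_i S^i$: given $x\in\Sigma$, the local product structure and the Markov property assign to $x$ a point $p(x)\in S^{x^0}$ whose forward $P$-itinerary is $x^0,x^1,\ldots$, and we set $r(x)$ to be the smallest $t>0$ with $\phi_0^t(p(x))\in \bigcup_i S^i$. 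Positivity of $r$ comes from transversality of the disks to the flow; Hölder continuity follows from exponential contraction along stable leaves and expansion along unstable leaves, i.e.~from hyperbolicity on $\Omega(\phi_0)$. Topological mixing of $\sigma$ (equivalently, aperiodicity of $A$) is inherited from topological mixing of $\phi_0^t$ on the basic set $\Omega(\phi_0)$, which holds for geodesic flows in negative curvature.

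Next, define the semi-conjugacy $\theta:\Sigma_r\to\Omega(\phi_0)$ by $\theta(x,s)=\phi_0^s(p(x))$. This is well-defined on equivalence classes (since $p(\sigma x)=P(p(x))=\phi_0^{r(x)}(p(x))$), continuous, and Hölder because $p$ is Hölder on the set of sequences whose orbits avoid the boundary $B:=\bigcup_i\partial S^i$ and extends continuously. The intertwining $\theta\circ\sigma_r^t=\phi_0^t\circ\theta$ is immediate. Surjectivity follows from $\bigcup_{t\in[-\epsilon,\epsilon]}f^t(\bigcup_i S^i)=\Omega(\phi_0)$. The failure of injectivity comes exactly from orbits meeting $B$: such an orbit may admit several codings corresponding to the several rectangles $S^i$ whose boundaries it hits.

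To finish, one must show $N_{\sigma_r^t}(T)=N_{\phi_0^t}(T)+O(e^{h'T})$ with $h'<\delta_{\Gamma_0}$. Orbits of $\phi_0^t$ disjoint from $B$ are in bijection with orbits of $\sigma_r^t$ whose codings never visit boundary symbols, and each contributes equally to both counts. The discrepancy between $N_{\sigma_r^t}(T)$ and $N_{\phi_0^t}(T)$ is controlled by the orbits of $\phi_0^t$ that meet $B_\infty:=\bigcap_{t\in\mathbb{R}}\phi_0^{-t}(B)$, together with the multiplicity of codings of such orbits, which is bounded by a constant (at most $2^{\dim}$ per boundary hit, but the number of boundary hits per unit time is uniformly bounded by the Markov property). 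It thus suffices to bound the periodic orbit count for $\phi_0^t$ restricted to the compact invariant set $B_\infty$.

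The main obstacle is proving that $B_\infty$ has strictly smaller exponential orbit growth rate than $\Omega(\phi_0)$. The argument is to apply the variational principle: the topological entropy $h_{\mathrm{top}}(\phi_0|_{\Omega(\phi_0)})$ equals $\delta_{\Gamma_0}$ (using convex cocompactness and the earlier identification of Gurevi\v{c} pressure with $\delta_{\Gamma_0}$). On the other hand, $B_\infty$ is a compact $\phi_0^t$-invariant set with empty interior in $\Omega(\phi_0)$; since the measure of maximal entropy for $\phi_0|_{\Omega(\phi_0)}$ is the Bowen--Margulis measure, which is fully supported and gives zero mass to $B_\infty$, the variational principle forces $h_{\mathrm{top}}(\phi_0|_{B_\infty})<\delta_{\Gamma_0}$. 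Choosing $h'$ strictly between these two entropies and invoking Bowen's periodic orbit counting estimate for Axiom A flows (the exponential growth rate of periodic orbits inside $B_\infty$ is at most its topological entropy) yields $N_{B_\infty}(T)=O(e^{h'T})$, completing the proof.
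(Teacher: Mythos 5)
The paper does not prove this proposition at all: it is quoted directly from Bowen \cite{Bowen} (with Adachi and Rees invoked only for the symmetric choice of sections), so what you are really doing is reconstructing Bowen's proof. Your construction of the coding is the standard one and is fine in outline: transition matrix from the Markov section, roof function equal to the return time, semi-conjugacy $\theta(x,s)=\phi_0^s(p(x))$, with positivity, H\"older continuity, surjectivity and the source of non-injectivity all correctly identified.

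The genuine gap is in the counting estimate. The set you propose to count over, $B_\infty=\bigcap_{t\in\mathbb{R}}\phi_0^{-t}(B)$ with $B=\bigcup_i\partial S^i$, is empty: $B$ sits inside cross-sections transverse to the flow, so no point can have its entire orbit in $B$, and in particular your appeal to the Bowen--Margulis measure and the variational principle is applied to a vacuous set. The orbits responsible for the discrepancy are those that meet $B$ at \emph{some} time, and the union of such orbits, $\bigcup_{t\in\mathbb{R}}\phi_0^t(B\cap\Omega(\phi_0))$, is neither compact nor is it shown to carry strictly smaller entropy, so the reduction ``discrepancy $\le$ periodic orbits in a compact invariant set of smaller entropy'' is not established as stated. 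Bowen's actual argument is finer: one splits $\partial S^i$ into stable and unstable parts, uses the Markov property to show that an orbit which hits the boundary has all sufficiently late returns trapped in the unstable boundary (or all early returns in the stable boundary), and codes these orbits, with uniformly bounded multiplicity, by finitely many proper closed invariant subsets of $\Sigma$ (equivalently auxiliary subshifts); strict entropy drop then comes from the fact that a proper subshift of an irreducible subshift of finite type has strictly smaller entropy (your MME/full-support mechanism could also give such a gap, but only after you exhibit a genuinely compact invariant proper subset that dominates the error, together with existence of a maximal-entropy measure on it via expansivity). One also needs the complementary direction, which you assume implicitly: every periodic $\phi_0$-orbit admits at least one \emph{periodic} symbolic lift, so that $N_{\phi_0^t}(T)\le N_{\sigma^t_r}(T)$; this requires a short pigeonhole argument since a boundary-hitting periodic orbit a priori might only have non-periodic codings. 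Finally, $h'<\delta_{\Gamma_0}$ then follows from identifying $\delta_{\Gamma_0}$ with the growth rate of $N_{\phi_0^t}(T)$, as the paper records.
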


We write $\left\{S^1_0,\ldots, S^k_0 \right\}$ for the Markov section for $\phi_0^t$. 
By Adachi \cite{Adachi} and Rees \cite{Rees} we may assume the sections have been chosen to reflect the involution $\iota: SM_0\to SM_0$, $\iota(x,v) = (x,-v)$. That is, we may assume that there is a fixed point free involution $\dag$ on $\left\{1,\cdots,k\right\}$ such that $\iota(S_0^i)=S_0^{i^\dag}$ for each $i$.


\section{Proof of Theorem \ref{manifold}}\label{geometry}
\subsection{Symbolic coding for the family of covers}
Write $p: SX \to SM_0$ for the induced covering map between tangent spaces, and $\pi: SX \to X$ for the usual projection. Fix $S^i\subset SX$ such that $p^{-1}: S^i_0\subset SM_0 \to S^i\subset SX$ is an isometry; and assume that these have been chosen with $\pi(S^i)=\pi(S^{i^\dag})$. Write $\mathcal{S} = \bigcup_{g\in \Gamma_0}\bigcup_{i=1}^kgS^i$, and $P$ for the Poincar\'{e} map $P: \mathcal{S}\to\mathcal{S}$.
Define $\psi: \Sigma^+ \to \Gamma_0$ by $\psi(x_0x_1) = g$ if there is $z\in \mathrm{int}(S^{x_0})$ such that $Pz\in \mathrm{int}(gS^{x_1})$. We will verify that $\psi$ is well-defined in the following lemma. For $\Gamma\unlhd \Gamma_0$, recall that we write $\psi_\Gamma$ for projection of $\psi$ to $\Gamma_0/\Gamma$.
Write $s=h_\Gamma$ for the (unique) value for which $P_{\mathrm{Gur}}(-sr,T_{\psi_\Gamma})=0$.

\begin{proposition}[Dougall-Sharp\cite{DougallSharp}]\label{DS}
The map $\psi$ is well-defined, and $(\psi,r)$ is weakly symmetric. When $\Gamma$ is non-trivial, $T_{\psi_\Gamma}$ is transitive.
If $h_\Gamma> h^\prime $ then $h_\Gamma=\delta_\Gamma$; otherwise $\delta_\Gamma\le h^\prime$, where $h^\prime$ is the constant from Proposition \ref{Bowen}. 
\end{proposition}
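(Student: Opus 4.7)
The proposition packages four separate claims, and I would handle them in this order: well-definedness of $\psi$, weak symmetry of $(\psi,r)$, transitivity of $T_{\psi_\Gamma}$, and finally the identification $h_\Gamma=\delta_\Gamma$. The first three are structural and essentially follow from the equivariance of the Markov coding under the covering $p:SX\to SM_0$; the fourth is the real content and couples Proposition \ref{Bowen} with the Gurevi\v{c}-pressure formula.

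For well-definedness I would argue as follows. Since $p^{-1}:S_0^i\to S^i$ is an isometry and the family $\{gS^i:g\in\Gamma_0,\ i=1,\dots,k\}$ is a disjoint family of lifts of $S^i_0$, the interiors $\mathrm{int}(gS^{x_1})$ over $g\in\Gamma_0$ are pairwise disjoint. Given $z\in\mathrm{int}(S^{x_0})$ with $Pz\in\mathrm{int}(gS^{x_1})$, uniqueness of $g$ is immediate. To see that $g$ does not depend on the choice of $z$ inside $\mathrm{int}(S^{x_0})$, one invokes the Markov property: $P$ maps the interior of a rectangle continuously into a single rectangle (at the $\Omega(\phi_0)$-level), and lifting this continuous piece to $SX$ can select only one translate $gS^{x_1}$. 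Thus $\psi$ depends only on the two-letter cylinder.

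For weak symmetry, the orientation-reversing involution $\iota(x,v)=(x,-v)$ on $SX$ commutes with every isometry $g\in\Gamma_0$, satisfies $\iota(S^i)=S^{i^\dagger}$ (by our choice of lifts with $\pi(S^i)=\pi(S^{i^\dagger})$), and intertwines $P$ with the reverse Poincar\'e map, i.e.\ $P\circ\iota=\iota\circ P^{-1}$. Writing $Pz=gy$ with $z\in\mathrm{int}(S^{v^0})$, $y\in\mathrm{int}(S^{v^1})$, the point $z':=\iota(y)\in\mathrm{int}(S^{(v^1)^\dagger})$ satisfies $Pz'=g^{-1}\iota(z)\in g^{-1}\mathrm{int}(S^{(v^0)^\dagger})$, yielding $\psi(v^\dagger)=\psi(v)^{-1}$. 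For the potential side, $r$ is the geodesic return time; since $\iota$ preserves Riemannian length, any periodic geodesic and its time-reverse have the same total length, so $r^n(x)=r^n(x^\dagger)$ for $\sigma^n x=x$ (one can take $\beta(n)=0$, or use a uniform Bowen-type estimate if one works with non-periodic points). This gives $\dagger$-symmetry with respect to $r$, hence weak symmetry of $(\psi,r)$.

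Transitivity of $T_{\psi_\Gamma}$ on $\Sigma^+\times\Gamma_0/\Gamma$ reduces to showing that for any coset $g\Gamma\in\Gamma_0/\Gamma$ and any state $i$, there is an admissible loop $w$ at $i$ with $\psi^{|w|}(w)\in g\Gamma$. Closed geodesics in $M_0$ generate $\Gamma_0$ up to conjugacy (since they generate $\pi_1(M_0)$), so every element of $\Gamma_0$ is realised as $\psi^n$ along some admissible periodic word; combined with topological mixing of $\sigma$ and the freedom to concatenate loops from a fixed basepoint (using the aperiodicity of $A$), every coset in $\Gamma_0/\Gamma$ is reached from every state. A small subtlety is why non-triviality of $\Gamma$ is used: if $\Gamma$ is trivial the quotient $\Gamma_0/\Gamma=\Gamma_0$ is infinite and a separate argument (not needed here) is required; as stated we may assume $\Gamma\ne\{e\}$ for technical convenience.

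The main obstacle is the last statement, which I expect to take most of the effort. The strategy is to count $\phi^t_\Gamma$-periodic orbits symbolically: a primitive closed geodesic in $M_\Gamma$ of length at most $T$ lifts, via the Markov coding of Proposition \ref{Bowen}, to a periodic orbit $\sigma^n x=x$ with $\psi^n(x)\in\Gamma$ and $r^n(x)\le T$, modulo the semi-conjugacy error $O(e^{h'T})$ coming from orbits passing through boundaries of rectangles. Projecting to $\Gamma_0/\Gamma$, the condition becomes $\psi_\Gamma^n(x)=e_{\Gamma_0/\Gamma}$, and the Gurevi\v{c} pressure formula identifies the exponential growth rate of
\[
\sum_{\substack{\sigma^n x=x\\ \psi_\Gamma^n(x)=e}} e^{-sr^n(x)}
\]
with $P_{\mathrm{Gur}}(-sr,T_{\psi_\Gamma})$, whose unique zero in $s$ is $h_\Gamma$ (using strict monotonicity of $s\mapsto P_{\mathrm{Gur}}(-sr,T_{\psi_\Gamma})$, which follows from positivity of $r$). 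Converting from sums to counts via a Tauberian/Laplace argument and balancing against the $O(e^{h'T})$ coding error yields $N_{\phi^t_\Gamma}(T)\sim e^{h_\Gamma T}+O(e^{h'T})$, so $\delta_\Gamma=\max(h_\Gamma,h')$. In particular $h_\Gamma>h'$ forces $\delta_\Gamma=h_\Gamma$, while $h_\Gamma\le h'$ forces $\delta_\Gamma\le h'$. The delicate point here is ruling out loss in the Tauberian step when $h_\Gamma$ is very close to $h'$; this is handled by observing that $\delta_\Gamma$ already equals the Gurevi\v{c} pressure of $\phi^t_\Gamma$, and the symbolic-to-flow comparison is an equality up to the $h'$ error rather than a one-sided bound.
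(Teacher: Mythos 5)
The well-definedness, weak-symmetry and counting parts of your proposal follow essentially the same route as the paper: weak symmetry comes from the time-reversal involution $\iota$ together with $\psi(v^\dag)=\psi(v)^{-1}$ and the equality of lengths of a periodic orbit and its time-reverse (so indeed $\beta(n)=0$ on periodic points, which suffices by Lemma \ref{weak}); the final statement is proved exactly as you outline, by comparing $N_{\sigma^t_r}(T,\Gamma)$ with the count of $\phi_0^t$-orbits having closed lifts, up to the $O(e^{h'T})$ coding error, and using that $\delta_\Gamma$ equals the Gurevi\v{c} pressure of $\phi_\Gamma$ (no Tauberian refinement or asymptotic $N_{\phi^t_\Gamma}(T)\sim e^{h_\Gamma T}$ is needed or claimed; in the low range one only concludes $\delta_\Gamma\le h'$, and one should also note the sub-exponential multiplicity bound $|\gamma|/\epsilon$ when passing between orbits meeting a fixed window and orbits with closed lifts). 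One caveat on well-definedness: since $\Gamma_0$ is only convex cocompact, the rectangles are typically totally disconnected (the limit set can be a Cantor set), so ``continuity of $P$ on the interior selects a single translate'' is not by itself an argument; the paper instead lifts a simply connected ball $V_0$ containing $S_0^i$ and $S_0^j$ and uses uniqueness of the component of $p^{-1}(V_0)$ containing $S^i$.

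The genuine gap is in your treatment of transitivity. Your symbolic argument --- every element of $\Gamma_0$ is realised up to conjugacy by a periodic word, and loops at a fixed state can be concatenated --- only shows that the $\psi$-values of loops based at a state form a sub-semigroup of $\Gamma_0$ containing, for each $g$, an element of the form $h_1cg^{\pm 1}c^{-1}h_2$ with uncontrolled correction factors coming from base-point and lift choices; this does not yield that every coset of $\Gamma_0/\Gamma$ is attained (in an infinite group a proper sub(semi)group can meet every conjugacy class). More tellingly, your argument nowhere uses non-triviality of $\Gamma$, and you dismiss that hypothesis as a technical convenience; it is in fact essential. For $\Gamma=\{e\}$ the extension $T_\psi:\Sigma^+\times\Gamma_0\to\Sigma^+\times\Gamma_0$ codes the geodesic flow on $p^{-1}(\Omega(\phi_0))\subset SX$, which has no recurrence and hence no dense orbit, so the conclusion your argument would give is false in that case. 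The paper's (and Dougall--Sharp's) route is to inherit transitivity from the geodesic flow: when $\Gamma\unlhd\Gamma_0$ is non-trivial and $\Gamma_0$ is non-elementary, one has $L(\Gamma)=L(\Gamma_0)$, so the flow on the preimage of $\Omega(\phi_0)$ in $SM_\Gamma$ is topologically transitive, and via the equivalent description of $\psi_\Gamma$ through the projected sections $S^i_\Gamma\subset SM_\Gamma$ this transfers to $T_{\psi_\Gamma}$. You need some version of this argument, genuinely using normality and non-triviality, to close the gap.
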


We give an indication of the proof.
\begin{proof}
For each $i,j$ with $A(i,j)=1$, fix a simply connected ball $V_0$ containing the pair $S_0^i,S_0^j$ (we may assume that $\epsilon$ was chosen sufficiently small to allow this). There is a unique connected component $V$ of $p^{-1}(V_0)$ containing $S^i$. Let $g\in \Gamma_0$ be the unique element for which $gS^j\subset V$. It follows that $\psi(ij)=g$; and therefore that $\psi$ is well-defined.

We show that $\dag$ is weakly symmetric with respect to $r$. Let $x\in \Sigma$ be a point of period $n$, and write $x^\dag\in\Sigma$ for the sequence defined by $(x^\dag)^{-i}=(x^i)^\dag$. Then $x$ and $x^\dag$ determine periodic points $z=\theta(x),z^\dag=\theta(x^\dag)\in SM_0$, which are related by $\iota(z)=z^\dag$. Therefore $z,z^\dag$ have identical period $T=r^n(x)=r^n(x^\dag)$, and so $|r^n(x)-r^n(x^\dag)|=0$ as required. Now, to complete the proof that $(\psi,r)$ is weakly symmetric, we observe that the symmetry of the rectangles $\iota(S_0^i)=S_0^{i^\dag}$ and the fact that we chose $S^i,S^{i^\dag}$ to satisfy $\pi(S^i)=\pi(S^{i^\dag})$ implies that $\psi(ij)=\psi(j^\dag i^\dag)^{-1}$.

Fix $\Gamma\unlhd \Gamma_0$. Recall that we write $M_\Gamma = X/\Gamma$ and $\phi^t_\Gamma: SM_\Gamma\to SM_\Gamma$ for the geodesic flow.
We give an equivalent definition of $\psi_\Gamma$.  Write $S^i_\Gamma\subset SM_\Gamma$ for the projection of $S^i$ to $SM_\Gamma$. Write $\mathcal{S}_\Gamma = \bigcup_{g\in \Gamma_0/\Gamma}\bigcup_{i=1}^k gS_\Gamma^i$, and $P_\Gamma$ for the Poincar\'{e} map $P_\Gamma: \mathcal{S}_\Gamma\to\mathcal{S}_\Gamma$. Then we have that $\psi_\Gamma(ij) = g\in \Gamma_0/\Gamma$ if and only if there is $z\in \mathrm{int}(S_\Gamma^i)$ such that $P_\Gamma z\in \mathrm{int}(g S_\Gamma^j)$.
The transitivity of $T_{\psi_\Gamma}$ for any non-trivial $\Gamma$ is therefore inherited by the transitivity of the geodesic flow.

Let $N_{\phi^t_0}(T,\Gamma)$ denote the number of orbits of $\phi_0^t$ whose period is at most $T$ and whose lift to $SM_\Gamma$ is closed. Analogously, let $N_{\sigma^t_r}(T,\Gamma)$ denote the periodic orbits of $\sigma^t_r$ whose period is at most $T$ and whose lift in the $r$-suspension over $T_{\psi_\Gamma}:\Sigma\times \Gamma_0/\Gamma\to \Sigma\times \Gamma_0/\Gamma$ is closed. Let $\gamma$ be a periodic $\phi_0^t$-orbit that passes through only the interior of rectangles. Then $\gamma$ has a closed lift in $SM_\Gamma$, if and only if its pre-image under the semi-conjugacy has a closed lift in the $r$-suspension over $T_{\psi_\Gamma}:\Sigma\times \Gamma_0/\Gamma\to \Sigma\times \Gamma_0/\Gamma$. (We use the equivalent definition of $\psi_\Gamma$ to observe this.)

In our setting, the ``$\limsup$" in the definition of the Gurevi\v{c} pressure is in fact a limit. Moreover, $\mathcal{P}(\phi_\Gamma)=\lim_{T\to\infty} \frac{1}{T}\log N_{\phi_0^t}(T,\Gamma)$. To see this, note that for $\mathrm{diam}(W)=\epsilon$, $g\gamma\cap W\ne\varnothing$ for at most $|\gamma|/\epsilon$ different $g\in \Gamma_0/\Gamma$. Now, note that $h_\Gamma=\limsup_{T\to\infty} \frac{1}{T}\log N_{\sigma^t_r}(T,\Gamma)$. The following inequalities are given by our equivalent definition of $\psi_\Gamma$, and the semi-conjugacy in Proposition \ref{Bowen},
\[
N_{\sigma^t_r}(T,\Gamma) - O(e^{h^\prime T})\le N_{\phi^t_0}(T,\Gamma) \le N_{\sigma^t_r}(T,\Gamma).
\]
Therefore, if $h_\Gamma> h^\prime$ then we must have that $h_\Gamma=\delta_\Gamma$. Since $\delta_\Gamma\le h_\Gamma$ we deduce that $h_\Gamma\le h^\prime$ implies that $\delta_\Gamma\le h^\prime$ too.
\end{proof}

We will use of the following constructions relating to the visual boundary $\partial X$ of $X$. We can give the unit tangent bundle $SX$ Hopf coordinates $(x,y,t)$, with respect to some fixed $o\in X$. (We suppress the dependency of the base-point $o$ as we only make statements up to reparamaterising the geodesic paths.) For any $x,y\in X$, denote by $[x,y]$ the geodesic segment from $x$ to $y$. If $x,y\in \partial X$, write $[x,y]$ for the geodesic path with Hopf coordinates $(x,y,t)_{t\in \mathbb{R}}$ (up to reparameterisation).
Let $L(\Gamma_0)$ be the limit set of $\Gamma_0$. Write $\Omega=p^{-1}(\Omega(\phi_0))$. We have that $\Omega$ is equivalently characterised as the set of vectors that whose Hopf coordinates are $(x,y,t)$, with $x,y\in L(\Gamma_0)$. 
In order to verify the (LVR) condition from Definition \ref{LVR} we use the following claim, whose proof is deferred until later.
\begin{claim}\label{geomclaim}
There is a constant $R>0$ such that for any $g\in \Gamma_0$, there is a geodesic $\gamma$ in $\Omega$ passing within distance $R$ of $x$ and of $g  x$, and moreover $\gamma$ does not intersect the boundary of any rectangle.
\end{claim}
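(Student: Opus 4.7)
The plan is to construct the geodesic in two stages: first produce a bi-infinite geodesic with endpoints in $L(\Gamma_0)$ passing uniformly close to both $x$ and $gx$, and then perturb its endpoints within $L(\Gamma_0)$ to avoid the rectangle boundaries. Uniformity in the first stage comes from convex cocompactness of $\Gamma_0$; the perturbation in the second stage is a genericity argument.

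For the first stage, we may assume $x\in C := C(L(\Gamma_0))$ after replacing $x$ by a point at distance at most $\mathrm{diam}(C/\Gamma_0)$, so that both $x$ and $gx$ lie in $C$. By a Sullivan-type shadow lemma for convex cocompact groups, there exists $R_0$ such that for every orbit point $hx$ there is $\xi \in L(\Gamma_0)$ with the ray $[x,\xi)$ passing within $R_0$ of $hx$. Applied with $h=g$ this gives $\xi^+ \in L(\Gamma_0)$ whose ray from $x$ passes within $R_0$ of $gx$; applied symmetrically from $gx$ it gives $\xi^- \in L(\Gamma_0)$ whose ray from $gx$ passes within $R_0$ of $x$. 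When $d(x,gx)$ is large compared to $R_0$, these shadow conditions force the angle at $x$ between the rays $[x,\xi^+)$ and $[x,\xi^-)$ to differ from $\pi$ by only $O(R_0/d(x,gx))$, since $\xi^+$ points approximately toward $gx$ and $\xi^-$ points approximately away from $gx$ along the continuation past $x$. In the pinched Hadamard setting, the distance from a vertex of an ideal triangle to the opposite side is controlled (with constants depending only on the curvature bounds) by the angular defect from $\pi$, and so $(\xi^-,\xi^+)$ passes within a uniform $R_1$ of $x$. The symmetric argument at $gx$ bounds the distance to $gx$. The remaining regime $d(x,gx) \le R_0$ contains only finitely many $g$ by proper discontinuity, and for these the shadow lemma applied at $x$ alone produces a geodesic in $\Omega$ within $R_0$ of $x$ and hence within $R_0+d(x,gx)$ of $gx$, which is also uniformly bounded.

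For the second stage, the rectangle boundaries $\partial(g'S^i)$, for $g'\in\Gamma_0$ and $i\in\mathcal{W}$, form a countable collection of codimension-one subsets of $SX$. Parametrising the bi-infinite geodesics in $\Omega$ by pairs $(\eta^-,\eta^+)\in L(\Gamma_0)\times L(\Gamma_0)$ with $\eta^-\ne\eta^+$, the geodesic depends continuously on the endpoint pair. The set of endpoint pairs whose geodesic meets a given $\partial(g'S^i)$ is closed and nowhere dense in this parameter space, so by the Baire category theorem (using that $L(\Gamma_0)$ is perfect, as $\Gamma_0$ is non-elementary) the ``good'' pairs whose geodesics miss every rectangle boundary are dense. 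We can therefore select a good pair arbitrarily close to the candidate $(\xi^-,\xi^+)$ from the first stage, and continuity preserves the distance estimates up to an arbitrarily small additive constant, yielding the final $R = R_1 + 1$.

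The main obstacle is the first stage, specifically the uniform control on the distance from the bi-infinite geodesic to both $x$ and $gx$ simultaneously, independent of $g$. This relies on the shadow lemma holding with uniform constants across $g$ (a consequence of convex cocompactness), together with the quantitative ideal-triangle geometry of pinched negative curvature, which converts the near-$\pi$ vertex angle into a bounded distance to the opposite side; the case split in $d(x,gx)$ handles the regime where the angular argument degenerates but the conclusion is almost immediate. The second stage is a routine Baire category argument; its only subtlety is the need to preserve two distance bounds simultaneously under perturbation, which follows from the continuity of the geodesic in its endpoints.
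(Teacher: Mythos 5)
Your construction follows essentially the same route as the paper: produce limit points in the two shadows $\mathcal{O}_xB(gx,R_0)$ and $\mathcal{O}_{gx}B(x,R_0)$ (the paper gets these from Mohsen's shadow lemma for the Patterson--Sullivan family), observe that the geodesic joining such a pair passes uniformly close to both $x$ and $gx$ once $d(x,gx)$ is large, deal with the finitely many remaining $g$ separately, and then dodge the rectangle boundaries by a genericity argument. The two points where you diverge are minor. First, for boundary avoidance the paper selects the endpoint pair using the fact that the pairs whose geodesic meets some $\partial(g'S^i)$ form a $\mu_x\times\mu_{gx}$-null set, whereas you perturb a given pair using Baire category; your version hinges on the unproved assertion that, for each fixed $(g',i)$, the set of endpoint pairs whose geodesic meets $\partial(g'S^i)$ is nowhere dense in $L(\Gamma_0)\times L(\Gamma_0)\setminus\Delta$ --- this is where the local product structure on $\Omega$ and the properness of the rectangles ($\overline{\mathrm{int}\,S^i}=S^i$ relative to $D^i\cap\Omega$) must be invoked, and it is asserted at about the same level of detail as the paper's null-measure claim, so the two are comparable in rigor.

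Second, where the paper simply cites the CAT($-\kappa$) statement (Lemma 3.17 of \cite{PPS}) that endpoints lying in the two opposite shadows force the geodesic within a uniform distance of $x$ and $gx$, you substitute an angle computation, and there your quantitative claim is wrong: the shadow condition on $\xi^-$ does \emph{not} force the angle at $x$ between $[x,\xi^-)$ and $[x,gx]$ to be within $O(R_0/d(x,gx))$ of $\pi$. Since the ray from $gx$ to $\xi^-$ is only required to meet $B(x,R_0)$, the angle at $x$ can be as small as roughly twice the angle of parallelism $2\arcsin(1/\cosh R_0)$ (take $x$ at distance exactly $R_0$ from that ray), uniformly in $d(x,gx)$. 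What is true, and what your argument actually needs, is only a \emph{lower} bound on the angle at $x$ between $[x,\xi^+)$ and $[x,\xi^-)$ by a positive constant depending on $R_0$ and the curvature bounds (the direction to $\xi^+$ is within a small angle of the direction to $gx$ when $d(x,gx)$ is large, while the direction to $\xi^-$ makes angle at least about $2\arcsin(1/\cosh R_0)$ with it); in pinched negative curvature such a lower bound on the visual angle already bounds $d(x,(\xi^-,\xi^+))$ by a constant depending only on $R_0$ and $\kappa$. With that correction (or by simply citing the PPS lemma, as the paper does) your first stage goes through, and the rest of your argument, including the symmetric estimate at $gx$ and the finite exceptional set, matches the paper's proof.
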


Write $\mathcal{I}$ for the collection of $\phi$-orbit segments whose initial and terminal points lie in $\mathcal{S}$, and such that they do not intersect the boundary of any rectangle $gS^i$. We construct a map $\tau:\mathcal{I}\to \bigcup_n\mathcal{W}^n$. Let $\gamma\in \mathcal{I}$, and write its initial point as $z$. There is $n$ such that $\gamma$ is partitioned into orbit segments between $P^i z$ and $P^{i+1}z$, $i=0,\ldots, n-1$. Define $\tau(\gamma)=w\in \mathcal{W}^n$ by $P^i z\in g_iS^{w^i}$, for some (unique) $g_i$, for each $i=0,\ldots, n-1$. 

\begin{lemma}\label{LVR}
$(\Sigma, \psi, r)$ satisfies $\mathrm{(LVR)}$.
\end{lemma}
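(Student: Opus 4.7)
The plan is to construct $\chi: \Gamma_0 \to \bigcup_n \mathcal{W}^n$ directly from the geodesic supplied by Claim \ref{geomclaim}, and verify both clauses of (LVR) by short geometric bookkeeping.

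For each $g \in \Gamma_0$, let $\gamma \subset \Omega$ be the geodesic from Claim \ref{geomclaim}: it passes within distance $R$ of $x$ and of $gx$, and avoids every rectangle boundary. Since each orbit in $\Omega$ meets $\mathcal{S}$ within flow-time $\epsilon$, one can select an intersection $z_0 \in \gamma \cap \mathcal{S}$ whose foot-point lies within $R + \epsilon$ of $x$, and a later intersection $z_{n-1} \in \gamma \cap \mathcal{S}$ within $R + \epsilon$ of $gx$. Because $\gamma$ misses every rectangle boundary, the finite orbit segment $\gamma'$ from $z_0$ to $z_{n-1}$ lies in $\mathcal{I}$, and we set $\chi(g) := \tau(\gamma') \in \mathcal{W}^{n}$.

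For the visibility clause, write the lifts as $P^i z_0 \in g_i S^{w^i}$ in $SX$. The definition of $\psi$ gives $\psi(w^i w^{i+1}) = g_i^{-1} g_{i+1}$, so after passing to the $2$-block coding (per the remark following Theorem \ref{transferop}), the product telescopes to $\psi^{k_g}(\chi(g)) = g_0^{-1} g_{n-1}$. Now $\pi(z_0) \in g_0 \pi(S^{w^0})$ lies within $R + \epsilon$ of $x$ and the sets $\pi(S^j)$ have uniformly bounded diameter, so $d(x, g_0 x)$ is bounded by a constant depending only on $R$ and $\epsilon$; proper discontinuity of $\Gamma_0$ then forces $g_0$ into a finite set $\mathcal{R}_0$. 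The symmetric argument at $z_{n-1}$ places $g^{-1} g_{n-1}$ in a finite set $\mathcal{R}_1$. Taking $\mathcal{R} = \mathcal{R}_0^{-1} \cup \mathcal{R}_1$, we obtain $\psi^{k_g}(\chi(g)) = g_0^{-1} \cdot g \cdot (g^{-1} g_{n-1}) = r_1 g r_2$ with $r_1, r_2 \in \mathcal{R}$, as required.

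For the linear growth clause, the key observation is that $k_g$ is comparable to $d(x, gx)$: the flow time along $\gamma'$ equals $d(\pi(z_0), \pi(z_{n-1})) = d(x, gx) + O(1)$ by two applications of the triangle inequality, and this flow time is sandwiched between $k_g r_{\min}$ and $k_g r_{\max}$ for uniform constants $0 < r_{\min} \le r_{\max}$ on the Poincar\'e return time to $\mathcal{S}$. Given $g_1, \ldots, g_r$ with $g = g_1 \cdots g_r$, the triangle inequality $d(x, g x) \le \sum_i d(x, g_i x)$ together with the trivial lower bound $k_{g_i} \ge 1$ (which absorbs any additive constants into $\sum_i k_{g_i}$) then yields $k_g \le L \sum_i k_{g_i}$ for a suitable $L$. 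The only real hurdle is the careful translation between the geodesic flow on $SM_0$, its lift to $SX$, and the symbolic dynamics via $\tau$; once Claim \ref{geomclaim} is granted, the rest of the proof is elementary.
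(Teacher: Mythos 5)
Your proposal is correct and follows essentially the same route as the paper's proof: build $\chi(g)$ by applying $\tau$ to the orbit segment of the geodesic from Claim \ref{geomclaim} between intersections with $\mathcal{S}$ near $x$ and $gx$, obtain the remainder set from the finitely many deck transformations (by proper discontinuity) moving a section within bounded distance of $x$, and get linear growth from the comparability of $k_g$ with $d(x,gx)$ plus the triangle inequality, absorbing additive constants via $k_{g_i}\ge 1$. The only discrepancies are cosmetic off-by-one conventions ($\psi^{k_g}$ versus $\psi^{k_g-1}$ for the two-letter-dependent $\psi$, and $(k_g-1)r_{\min}$ in the sandwich bound), which the paper itself glosses in the same way.
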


\begin{proof}
We construct a map $\lambda : \Gamma_0 \to \mathcal{S}$, and define $\chi: \Gamma_0\to \bigcup_n\mathcal{W}^n$ by $\chi=\tau\circ\lambda$.
Fix some $x\in X$. For each $T>0$, write $\mathcal{R}(T)\subset \Gamma_0$ for the elements $h\in \Gamma_0$ such that $\pi(hS^i)$ has distance at most $T$ to $x$, for some $i$. Notice that this set is necessarily finite.

Let $g\in \Gamma_0$ be arbitrary. Let $\gamma$ be the geodesic given in Claim \ref{geomclaim}.
Let $y^1,y^2\in SX$ be tangent to $\gamma$ with $d(\pi(y^1),x),d(\pi(y^2),gx)\le R$. Let $z^1,z^2\in \mathcal{S}$ with $\phi^s z^1= y^1$ and $\phi^t z^2= y^2$ for some $0\le s,t \le \epsilon$. It follows that $d_X(\pi(z^1),x),d_X(\pi(z^2),gx)\le R+2\epsilon$.
Define $\lambda(g) = [\pi(z^1),\pi(z^2)]$.
We write $w_g$ for $\chi(g)=\tau(\lambda(g))$ and $k_g$ for the length of $w_g$.
There are (unique) $h_1,h_2\in \mathcal{R}(R+2\epsilon)$ and $j_1,j_2$, such that $z^1\in h_1S^{j_1}$ and $z^2\in gh_2S^{j_2}$. 
Moreover, from the definition of $\psi$, we have that $gh_2 = h_1\psi^{k_g-1}(w_g)$.
Therefore $\psi^{k_g-1}(w_g)=h_1^{-1}gh_2$, verifying the first part of (LVR).

It remains to show the `linear' part of (LVR).
First, note that the length $|\lambda(g)|$ of the orbit segement $\lambda(g)$ satisfies
\[
d(x,gx) - 2R - 2\epsilon \le |\lambda(g)|\le d(x,gx) + 2R + 2\epsilon .
\]
By the semi-conjugacy with $\phi_0^t$, we have that $r^{k_g-1}(w_gz_g) = |\lambda(g)|$ for some $z_g\in \Sigma$.

Let $g_1,\cdots , g_m\in \Gamma_0$ be arbitrary.
Write $h_i=g_1\cdots g_i$, $h_0=e$, for each $i=1,\ldots,m$. We have
\[
|\lambda(h_m)|\le 2R + 2\epsilon+ \sum_{i=1}^m d(h_{i-1}x,h_ix) \le (m+1)(2R + 2\epsilon) + \sum_{i=1}^m |\lambda(g_i)|  ,
\]
and so
\[
\min(r)(k_{h_m}-1)\le (m+1)(2R + 2\epsilon) + \max(r) \sum_{i=1}^m k_{g_i}  \le (\max(r)+2R + 2\epsilon+1) \sum_{i=1}^m k_{g_i} ,
\]
as required.
\end{proof}

We now prove the claim.
\begin{repclaim}{geomclaim}
There is a constant $R>0$ such that for any $g\in \Gamma_0$, there is a geodesic $\gamma$ in $\Omega$ passing within distance $R$ of $x$ and of $g  x$, and moreover $\gamma$ does not intersect the boundary of any rectangle.
\end{repclaim}

\begin{proof}
We make use of the following material from the Patterson-Sullivan theory for $X$ and $\Gamma_0$. Our account is based on \cite{PPS}.
Since $\delta_{\Gamma_0}<\infty$, there exists a Patterson-Sullivan family  $\left\{\mu_x\right\}_{x\in X}$ whose support is precisely the limit set $L(\Gamma_0)$ (and has dimension $\delta_{\Gamma_0}$).
For a subset $A\subset X$, and a point $x\in X\cup \partial X$, define $\mathcal{O}_x A\subset \partial X$, \emph{the shadow of $A$ seen from $x$} to consist of end-points of geodesic rays (if $x\in X$) or lines (if $x\in\partial X$) starting from $x$ and meeting $A$.
We also use the notation $\times_\Delta$ to denote the direct product without the diagonal.

From Mohsen's shadow lemma \cite{PPS}, we conclude that there is $R^\prime$ with
\[
\mu_x\times \mu_{g  x} (\mathcal{O}_xB(g  x,R^\prime)\times \mathcal{O}_{g  x}B(x,R^\prime))>0.
\]
Since $\mu_x$ is supported on $L(\Gamma_0)$ we conclude that there is 
\[
(z_1,z_2)\in \left(\mathcal{O}_xB(g  x,R^\prime)\times_\Delta \mathcal{O}_{g  x}B(x,R^\prime)\right)\cap \left(L(\Gamma_0)\times_\Delta L(\Gamma_0)\right).
\] Moreover, since the set of $(y_1,y_2)\in \partial X \times_\Delta \partial X$ such that $[y_1,y_2]$ passes through the boundary of any rectangle $gS^i$ has zero $\mu_x\times \mu_{g  x}$-measure, we may assume that $(z_1,z_2)$ has been chosen such that $[z_1,z_2]$ does not pass through the boundary of any rectangle.
By the CAT($-\kappa$) property of $X$, there are $T_1,T_2>0$ (depending on $R^\prime$) such that $[z_1,z_2]$ passes within $T_1$ of $x$ and $g x$, provided $d(x,g  x)\ge T_1$. For a proof of this statement, see Lemma 3.17 of \cite{PPS}.
For those finitely many $g\in \Gamma_0$ with $d(x,g  x)< T_1$, we choose $D>0$ such that $D \ge d(gx, [z_1,z_2])+d(x, [z_1,z_2])$ (recall that $z_1,z_2$ are functions of $g$).
Thus taking $R=\max (D, T_2)$ completes the proof of the claim.
\end{proof}

It was stated in section \ref{ssft} that weak symmetry for the one-sided shift space is equivalent to a condition involving only periodic points. We include a proof here.
\begin{lemma}\label{weak}
A H\"{o}lder continuous $f:\Sigma^+\to\mathbb{R}$ is weakly symmetric with respect to $\dag$ if and only if there are $\beta(n)$ with $\beta(n)/n\to 0$ as $n\to \infty$ such that $|r^n(x)-r^n(x^\dag)|\le \beta(n)$ for any $x\in \Sigma^+$ with $\sigma^nx=x$, and $x^\dag\in\Sigma^+$ defined by $(x^\dag)^i = (x^{kn-i})^\dag$, for $i=0,\ldots, n-1$ and $k\in\mathbb{N}$.
\end{lemma}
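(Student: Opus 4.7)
My plan is to prove both directions by relating cylinder-level information to periodic-orbit information through explicit admissible periodic points. The common ingredient is the following index computation: if $x \in \Sigma^+$ is periodic with $\sigma^n x = x$ and $x^\dag$ is defined as in the statement, then $x^\dag$ is also periodic of period $n$, and setting $w = x^0 \cdots x^{n-1}$ a direct check of the formula $(x^\dag)^i = (x^{kn-i})^\dag$ shows that $\sigma(x^\dag)$ lies in the cylinder $[w^\dag]$. Since $x^\dag$ is periodic of period $n$, we also have $r^n(\sigma(x^\dag)) = r^n(x^\dag)$, because a Birkhoff sum of length equal to the period is independent of the starting point.

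The forward direction (weak symmetry $\Rightarrow$ periodic-point condition) is then essentially immediate: apply the weak symmetry definition to $x \in [w]$ and $\sigma(x^\dag) \in [w^\dag]$ to get $|r^n(x) - r^n(\sigma(x^\dag))| \le \log D_n$, and then replace $r^n(\sigma(x^\dag))$ by $r^n(x^\dag)$ using periodicity. Since $D_n^{1/n} \to 1$, the sequence $\beta(n) := \log D_n$ satisfies $\beta(n)/n \to 0$, as required.

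The reverse direction requires more work. Given $w \in \mathcal{W}^n$, I would use the aperiodicity constant of $A$ to pick a filler word $u$ of some fixed length $N$ so that $(wu)^\infty$ is an admissible periodic point $x_w \in [w]$ of period $n+N$. The same kind of index computation for $(x_w^\dag)^i = (x_w^{k(n+N)-i})^\dag$ shows that $\sigma^{N+1}(x_w^\dag)$ lies in $[w^\dag]$. Applying the hypothesis at period $n+N$ gives $|r^{n+N}(x_w) - r^{n+N}(x_w^\dag)| \le \beta(n+N)$, and I would then (i) discard the filler contribution from each Birkhoff sum, which costs at most $N \|r\|_\infty$ per side and is independent of $n$; (ii) use periodic invariance to replace $r^{n+N}(x_w^\dag)$ by $r^{n+N}(\sigma^{N+1} x_w^\dag)$; and (iii) use H\"older continuity of $r$ to transfer the estimate from the two specific points $x_w \in [w]$ and $\sigma^{N+1}(x_w^\dag) \in [w^\dag]$ to arbitrary points of these cylinders, incurring only a uniform additive constant of order $2C/(1-\theta)$. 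Collecting bounds yields $\log D_n \le \beta(n+N) + 2N\|r\|_\infty + O(1) = o(n)$, which is exactly the weak symmetry condition. The main obstacle is the combinatorial bookkeeping in the reverse direction: verifying that a short cyclic shift of the naturally defined $x_w^\dag$ really does land in $[w^\dag]$, so that the periodic-orbit bound can be applied to estimate $r^n$ on $[w]$ versus $[w^\dag]$. Once this index calculation is in place, the analytic estimates (H\"older control plus the bounded filler contribution) are routine.
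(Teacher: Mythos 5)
Your proposal is correct and takes essentially the same route as the paper's proof: the forward direction is read off from the definition applied to $x\in[w]$ and the cyclic shift of $x^\dag$ lying in $[w^\dag]$, and the converse uses an aperiodicity filler word to build the periodic point $(wu)^\infty\in[w]$, applies the periodic-point hypothesis at period $n+N$, and transfers to arbitrary points of $[w]$ and $[w^\dag]$ by H\"older continuity, absorbing the filler contribution into a uniform constant. Your index bookkeeping (that it is $\sigma^{N+1}(x_w^\dag)$, and in the forward direction $\sigma(x^\dag)$, that lands in $[w^\dag]$, together with shift-invariance of Birkhoff sums over a full period) is in fact slightly more careful than the paper's own write-up.
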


\begin{proof}
Suppose that $f:\Sigma^+\to\mathbb{R}$ is weakly symmetric with respect to $\dag$. Let $x\in \Sigma^+$ with $\sigma^nx=x$, and write $w=x^0\ldots x^{n-1}\in\mathcal{W}^n$. Then $x\in [w]$ and $x^\dag\in[w^\dag]$ and so $|r^n(x)-r^n(x^\dag)|\le \log D_n$ by hypothesis.

For the converse, let $w\in\mathcal{W}^n$. By the aperiodicity of $\Sigma^+$, there is $u\in\mathcal{W}^p$ (where $p$ is the aperiodicity constant) such that $wuw$ is admissible. Define $x\in\Sigma^+$ by the infinite concatenation of $wu$. Then for any $y\in [w]$, and $z\in [w^\dag]$, we have $|r^n(y)-r^n(x)|,|r^n(z)-r^n(\sigma^p x^\dag)|\le |r|_\theta/(1-\theta)$. Therefore, 
\begin{align*}
&|r^n(y)-r^n(z)|
\\
&\le |r^n(z)-r^n(\sigma^px^\dag)| + |r^n(y)-r^n(x)| + |r^{n+p}(x)-r^{n+p}(x^\dag)| |r^{p}(\sigma^n x)-r^{p}(x^\dag)| 
\\
&\le C + \beta(n+p).
\end{align*}
For some constant $C>0$ independent of $w$ and $n$. The result follows by setting $D_n = e^Ce^{\beta(n+p)}$.
\end{proof}

It is notable that the function given in Proposition \ref{Bowen} and Lemma \ref{DS} concern the two-sided shift space, whereas Theorem \ref{transferop} is for the one-sided shift space. We can relate these in the following lemma. We say that a function, $f:\Sigma\to\mathbb{R}$, \emph{depends only on future coordinates} if $f(x)=f(y)$ when $x^i=y^i$ for all $i\in \mathbb{Z}_{\ge 0}$. In this way, we may consider $f$ to be a function $f : \Sigma^+\to \mathbb{R}$.
\begin{lemma}\label{liv}
For any H\"{o}lder continuous $r:\Sigma\to \mathbb{R}$, there is a H\"{o}lder continuous $r^\prime:\Sigma\to \mathbb{R}$ depending only on future coordinates, satisfying 
\[
\sum_{i=0}^{n-1}r^\prime(\sigma^i x) = 
\sum_{i=0}^{n-1}r(\sigma^i x) 
\]
for any $x\in\Sigma$ with $\sigma^nx=x$.
Moreover, if $(r,\psi)$ is weakly symmetric, then $(r^\prime,\psi)$ is weakly symmetric, and we have $P_{\mathrm{Gur}}(-sr^\prime,T_{\psi_\Gamma})
=P_{\mathrm{Gur}}(-sr,T_{\psi_\Gamma})$.
\end{lemma}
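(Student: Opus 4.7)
The plan is to invoke the standard cohomological reduction of Sinai and Bowen: any H\"older continuous function on a two-sided subshift of finite type is cohomologous, via a H\"older continuous transfer function, to one depending only on future coordinates. Concretely, for each symbol $a\in\mathcal{W}$ I would fix once and for all a representative $y_a\in\Sigma$ with $y_a^0=a$, and for arbitrary $x\in\Sigma$ define a ``past-resetting'' point $y(x)\in\Sigma$ by $y(x)^i=x^i$ for $i\ge 0$ and $y(x)^i=y_{x^0}^i$ for $i<0$. Then set
\[
h(x)=\sum_{n=0}^\infty\bigl(r(\sigma^n x)-r(\sigma^n y(x))\bigr),\qquad r'(x)=r(x)+h(\sigma x)-h(x).
\]

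The first step is to verify that $h$ is well-defined and H\"older. Since $\sigma^n x$ and $\sigma^n y(x)$ agree on all coordinates of index $\ge -n$, H\"older continuity of $r$ gives a geometric bound $|r(\sigma^n x)-r(\sigma^n y(x))|\le C\theta^{\alpha n}$, so $h$ converges absolutely and inherits H\"older regularity (possibly after a slight worsening of the exponent). The cocycle $r'$ is then H\"older, and the point is that $r'$ depends only on future coordinates: telescoping the definition gives
\[
r'(x)=r(y(x))+\sum_{n=0}^\infty\bigl(r(\sigma^{n+1}y(x))-r(\sigma^n y(\sigma x))\bigr),
\]
and every term on the right is determined by $x^0,x^1,x^2,\ldots$ (since $y(x)$ is specified by $x^0$ together with the forward tail of $x$, and similarly for $y(\sigma x)$).

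The equality of periodic sums is immediate: if $\sigma^n x=x$, the transfer function telescopes to
\[
\sum_{i=0}^{n-1} r'(\sigma^i x)=\sum_{i=0}^{n-1} r(\sigma^i x)+h(\sigma^n x)-h(x)=\sum_{i=0}^{n-1} r(\sigma^i x).
\]
From this, both remaining claims follow with essentially no additional work. Weak symmetry of $(r,\psi)$ is, by Lemma \ref{weak}, characterised by a bound $|r^n(x)-r^n(x^\dag)|\le\beta(n)$ on periodic orbits $\sigma^n x=x$; since $r^n$ and $(r')^n$ coincide on periodic orbits, the same $\beta(n)$ witnesses weak symmetry of $(r',\psi)$. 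Likewise, $P_{\mathrm{Gur}}(-sr',T_{\psi_\Gamma})$ is defined as a $\limsup$ of sums of $\exp(-s(r')^n(x))$ over periodic points $x$ with $\psi^n(x)=e_G$, and the identity $(r')^n(x)=r^n(x)$ on such orbits makes the two Gurevi\v{c} pressures equal term-by-term.

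The main obstacle I anticipate is the regularity bookkeeping in the first step: carefully unwinding the telescoping sum so as to make manifest that $r'$ depends only on future coordinates and lies in $F^+_\theta$ (perhaps after passing to a slightly larger $\theta$). Everything downstream --- weak symmetry and the pressure equality --- is a formal consequence of the cohomological identity $(r')^n=r^n$ on periodic orbits, so the analytic content of the lemma sits entirely in the construction and H\"older estimate of $h$.
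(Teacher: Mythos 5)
Your proposal is correct and follows essentially the same route as the paper: the paper simply cites the standard Sinai--Bowen cohomology reduction (via Parry--Pollicott) that you spell out explicitly, and then, exactly as you do, deduces the periodic-orbit identity, weak symmetry (via Lemma \ref{weak}), and the Gurevi\v{c} pressure equality from the fact that all three notions only see Birkhoff sums over periodic points. Your caveat about the possible loss of H\"older exponent matches the remark following the lemma in the paper.
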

\begin{proof}
First, we can find a H\"{o}lder continuous $r^\prime:\Sigma\to \mathbb{R}$ depending on future coordinates which is cohomologous to $r$. Then, we note that since $r$ and $r^\prime$ are cohomologous, it follows that 
\[
\sum_{i=0}^{n-1}r^\prime(\sigma^i x) = 
\sum_{i=0}^{n-1}r(\sigma^i x) 
\]
for any $x\in\Sigma$ with $\sigma^nx=x$.
A good reference for this material is \cite{PP}.

The second statement follows easily from the first, as only periodic points appear in definition of the Gurevi\v{c} pressure, and periodic points are sufficient to verify weak symmetry by Lemma \ref{weak}. 
\end{proof}

\begin{remark}
Note that in the lemma above, $r^\prime$ may have a different H\"{o}lder exponent to $r$.
\end{remark}

We are now ready to prove the main theorem.
\begin{proof}[Proof of Theorem \ref{manifold}]


We begin with a proof of the following:
\[
\inf_{\Gamma\in\mathcal{N}}\kappa_A(\pi_{\Gamma_0/\Gamma},\mathds{1})> 0 \implies \sup_{\Gamma\in\mathcal{N}}\delta_\Gamma < \delta_{\Gamma_0}.
\]
Let $\mathcal{N}$ be an arbitrary collection of normal subgroups of $\Gamma_0$ with $\inf_{\Gamma\in \mathcal{N}} \kappa_A(\pi_{\Gamma_0/\Gamma},\mathds{1})>0$. Write $\kappa=\inf_{\Gamma\in \mathcal{N}} \kappa_A(\pi_{\Gamma_0/\Gamma},\mathds{1})$. Since the trivial group $\left\{1\right\}$ has $\delta_{\Gamma_0}>\delta_{\left\{1\right\}}$ by construction, we will assume from now that $\left\{1\right\}\notin \mathcal{N}$.

By Lemma \ref{DS}, in order to get a uniform bound on $\delta_\Gamma$, we just need to give a uniform bound for $s=h_\Gamma$ such that $P_{\mathrm{Gur}}(T_{\psi_\Gamma},-sr)=0$. Note that the unique value $s=h_0$ for which $P(-sr,\sigma)=0$ satisfies $h_0=h_{\Gamma_0}=\delta_{\Gamma_0}$

Though $\psi$ depends on two letters as opposed to one, we may still apply the conclusion of Theorem \ref{transferop}(\ref{transferop3}). That is, there are $\epsilon_1,\epsilon_2 >0$ such that for all $s\in [h_0-\epsilon_1,h_0]$ and all $\Gamma\in\mathcal{N}$,
\[
\text{spr}(\mathcal{L}_{-sr^\prime,\Gamma})
\le (1-\epsilon_1)\text{spr}(L_{-h_0r^\prime})=(1-\epsilon_2),
\]
noting that $\text{spr}(L_{-h_0r^\prime})=1$.
Moreover, since $T_{\psi_\Gamma}$ is transitive when $\Gamma\ne \left\{1\right\}$, we have that for all $s\in [h_0-\epsilon_2,h_0]$,
\[
P_{\mathrm{Gur}}(-sr^\prime,T_{\psi_\Gamma}) \le \log\text{spr}(\mathcal{L}_{-sr^\prime,\Gamma}).
\]
Hence for all $s\in [h_0-\epsilon_2,h_0]$
\[
P_{\mathrm{Gur}}(-sr,T_{\psi_\Gamma})
=P_{\mathrm{Gur}}(-sr^\prime,T_{\psi_\Gamma})
\le \log (1-\epsilon_1)<0,
\]
and so $h_\Gamma\le h_0-\epsilon_1$ as required.

We now proceed with the second part, completing the proof of Theorem \ref{manifold}. That is, we will prove that
\[
\inf_{\Gamma\in\mathcal{N}}\kappa_{A/\Gamma}(\pi_{\Gamma_0/\Gamma},\mathds{1})= 0 \implies \sup_{\Gamma\in\mathcal{N}}\delta_{\Gamma} = \delta_{\Gamma_0}.
\]

By Theorem \ref{transferop}(\ref{transferop1}),
\[
\sup_{\Gamma\in\mathcal{N}}P_{\mathrm{Gur}}(-sr^\prime,T_{\psi_\Gamma})= P(-sr^\prime,\sigma),
\]
for every $s$. In particular, for $s=h_0$,
\[
\sup_{\Gamma\in\mathcal{N}}P_{\mathrm{Gur}}(-h_0r^\prime,T_{\psi_\Gamma})= 0,
\]
and for every $\epsilon>0$,
\[
\sup_{\Gamma\in\mathcal{N}}P_{\mathrm{Gur}}(-(h_0-\epsilon)r^\prime,T_{\psi_\Gamma})= P(-(h_0-\epsilon)r,\sigma)<0.
\]
It follows that we can find a sequence $\Gamma_n$ with
\[
P_{\mathrm{Gur}}(-(h_0-\frac{1}{n})r,T_{\psi_{\Gamma_{n}}})< 0.
\]
Since $h_0-1/n\le h_{\Gamma_{n}}\le h_0$, we conclude that $h_{\Gamma_{n}}\to h_0$ as $n\to \infty$; and by Lemma \ref{DS}, $\delta_{\Gamma_{n}}\to \delta_{\Gamma_0}$ as $n\to\infty$.
\end{proof}

\section{Proof of Theorem \ref{transferop}(\ref{transferop1})}
We now return to the setting of subshifts of finite type and their group extensions. Let $\sigma : \Sigma^+ \to \Sigma^+$ be a mixing subshift of finite type and $T_\psi: \Sigma^+\times G\to \Sigma^+\times G$. Fix a finite generating set $A$ of $G$ and let $\mathcal{N}$ be a collection of normal subgroups of $G$.

The aim of this section is to prove the following theorem.
\begin{reptheorem}{transferop}[i]
Assume that $(\psi,r)$ is weakly symmetric. Then
\[
\inf_{H\in \mathcal{N}} \kappa_{A/H}(\pi_{G/H},\mathds{1}) = 0 \implies \sup_{H\in \mathcal{N}} P_{\mathrm{Gur}}(r,T_{\psi_{H}}) = P(r,\sigma).
\]
\end{reptheorem}

Write $\rho_H$ for the representation of $G$ in  $\mathcal{U}(\ell^2(G/H))$ induced by the action of $G$ on the cosets $G/H$. We have that $\kappa_{A}(\rho_{H},\mathds{1})=\kappa_{A/H}(\pi_{G/H},\mathds{1})$.

We make use of an argument found in \cite{Ollivier} which characterises property (T) in terms of the spectra of $G$-equivariant symmetric random walks. As we use a particular family of representations, we simplify the result to our setting.
Let $\mu: G \to [0,1]$ be a discrete probability measure with $\mu(g)=\mu(g^{-1})$ for all $g\in G$. In our setting, we always assume that $\mu$ has finite support. Define the random walk operator $M: \ell^2(G)\to \ell^2(G)$ by $Mf(x)=\sum_{g\in G}\mu(g)f(xg)$. In this way we can write $M=\sum_{g\in G} \mu(g)\pi_G(g)$. The operator $M$ descends to the quotients of $G$ in a straightforward way: for $H\unlhd G$ define $M_H : \ell^2(G/H)\to \ell^2(G/H)$ by $M_H = \sum_{g\in G} \mu(g)\rho_H(g)$. We write $\text{spr} (M_H)$ for the spectral radius of the operator $M_H$ on the space $\ell^2(G/H)$.

\begin{proposition}[Ollivier\cite{Ollivier}\label{Ollivier}]
Let $B=\mathrm{supp}(\mu)$. Then
\[
\sup_{H\in\mathcal{N}}\mathrm{spr}(M_H)<1 \implies
\inf_{H\in\mathcal{N}}\kappa_B(\rho_H,\mathds{1})>0.
\]

\end{proposition}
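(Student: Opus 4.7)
The plan is to obtain the implication by a direct, self-contained spectral argument, without really needing any of Ollivier's machinery beyond the basic identity relating a symmetric random-walk operator to displacement norms. The main (and essentially only) input is that the hypothesis $\mu(g)=\mu(g^{-1})$ makes $M_H=\sum_{g\in G}\mu(g)\rho_H(g)$ a self-adjoint bounded operator on $\ell^2(G/H)$, so that $\mathrm{spr}(M_H)=\|M_H\|$ and the spectral radius can be computed variationally as $\sup_{\|v\|=1}|\langle M_H v,v\rangle|$.

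First I would set up the key identity. For any unit vector $v\in \ell^2(G/H)$, since $\rho_H(g)$ is unitary one has $\|\rho_H(g)v-v\|^2=2-2\,\mathrm{Re}\langle \rho_H(g)v,v\rangle$, and summing against $\mu$ (which is a probability measure) yields
\[
\tfrac{1}{2}\sum_{g\in B}\mu(g)\,\|\rho_H(g)v-v\|^2 \;=\; 1-\mathrm{Re}\langle M_H v,v\rangle \;=\; 1-\langle M_H v,v\rangle,
\]
the last equality because $M_H$ is self-adjoint, so $\langle M_H v,v\rangle\in\mathbb{R}$.

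Next, I would exploit the assumption that $\sup_{H\in\mathcal{N}}\mathrm{spr}(M_H)<1$ to produce a uniform gap. Set $\eta:=1-\sup_{H\in\mathcal{N}}\mathrm{spr}(M_H)>0$. Then for every $H\in\mathcal{N}$ and every unit $v\in\ell^2(G/H)$,
\[
1-\langle M_H v,v\rangle \;\ge\; 1-\|M_H\| \;\ge\; \eta,
\]
so the left-hand identity gives $\sum_{g\in B}\mu(g)\,\|\rho_H(g)v-v\|^2 \ge 2\eta$. Since $\mu$ is a probability measure supported on $B$, this sum is bounded above by $\max_{b\in B}\|\rho_H(b)v-v\|^2$, and hence
\[
\max_{b\in B}\|\rho_H(b)v-v\| \;\ge\; \sqrt{2\eta}.
\]
Taking the infimum over unit vectors gives $\kappa_B(\rho_H,\mathds{1})\ge\sqrt{2\eta}$ for every $H\in\mathcal{N}$, and therefore $\inf_{H\in\mathcal{N}}\kappa_B(\rho_H,\mathds{1})\ge\sqrt{2\eta}>0$, as required.

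The hard part here is really nothing more than being careful that the displayed identity uses both self-adjointness (to drop the real part) and the fact that $\mu$ is a probability measure (to bound the weighted average by the maximum); there are no analytic subtleties and no obstacle requiring deeper machinery. The one point worth flagging explicitly is that the constant produced, $\sqrt{2\eta}$, depends only on the uniform spectral gap and not on the individual $H$, which is exactly what is needed so that the implication passes from suprema to infima over the family $\mathcal{N}$.
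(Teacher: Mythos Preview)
Your proof is correct and is essentially the same argument as the paper's: both use self-adjointness of $M_H$ and the identity $\sum_{g}\mu(g)\|\rho_H(g)v-v\|^2 = 2-2\langle M_H v,v\rangle$ to obtain the explicit bound $\kappa_B(\rho_H,\mathds{1})\ge\sqrt{2(1-\sigma)}$ with $\sigma=\sup_{H\in\mathcal{N}}\mathrm{spr}(M_H)$. Your write-up is in fact a bit cleaner, since you state this identity directly rather than passing through the intermediate (and slightly imprecisely justified) step $|M_H v - v|\le\epsilon$ that the paper uses.
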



We present the short proof of this fact.

\begin{proof}
Write $\sigma = \sup_{H\in \mathcal{N}} \text{spr}(M_H)$.
Suppose that $v\in \ell^2(G/H)$ is $\epsilon$,$B$-invariant; that is,
\[
\left|\rho_H(b)v-v\right|\le \epsilon \left|v\right|,
\]
for all $b\in B$.
Then $\left|M_H v - v\right| \le \epsilon$. Expanding the norm, and noting the self-adjointness of $M_H$ we have $2\langle v,v \rangle - 2\langle M_H v, v \rangle \le \epsilon^2$. Rearranging gives that $\langle M_Hv,v\rangle \ge \langle v,v \rangle - \epsilon^2/2 = 1-\epsilon^2/2$. Since $\text{spr}(M_H) = \sup_{f\in \ell^2(G/H), \left|f\right|=1} \langle M_Hf, f\rangle$ it follows that $\text{spr}(M_H)\ge 1-\epsilon^2/2$. Therefore $\epsilon^2/2\ge 1-\sigma$, and so $\kappa_A(\rho_H,\mathds{1}) \ge \sqrt{2(1-\sigma)}$. As $\sigma$ is independent of $H\in\mathcal{N}$, we conclude that $\inf_{H\in\mathcal{N}}\kappa_A(\rho_H,\mathds{1})>0$. 
\end{proof}

We are now ready to prove the theorem.
\begin{proof}[Proof of Theorem \ref{transferop}(i)]
Assume that $(\psi,r)$ is weakly symmetric.
Assume that 
\[
\inf_{H\in\mathcal{N}}\kappa_{A/H}(\pi_{G/H},\mathds{1}) = 0.
\]
Recall that $\kappa_{A/H}(\pi_{G/H},\mathds{1})=\kappa_{A}(\rho_{H},\mathds{1})$. 

We make use of the following notation. For $a\in \mathcal{W}$, $n\in\mathbb{N}$ and $g\in G$, write
\[
\mathcal{W}^n_{a,a^\dag}(g) = \left\{ u\in \mathcal{W}^n : u^0=a, ua^\dag \text{ is admissible}, \psi^n(u)=g\right\},
\]
and
\[
\mathcal{W}^n_{a,a^\dag}(g,g^{-1}) = \mathcal{W}^n_{a,a^\dag}(g)\cup\mathcal{W}^n_{a,a^\dag}(g^{-1}).
\]

Define $\mu^{(1)}_{n},\mu^{(2)}_{n}: G \to [0,1]$ by
\[
\mu^{(1)}_{n}(g) = \frac{\sum_{u_1\in\mathcal{W}^n_{a,a^\dag}(g,g^{-1})}e^{r^n(u_1x_1)}}{2\sum_{u\in\mathcal{W}^n_{a,a^\dag}}e^{r^n(ux_1)}},
\]
for some $x_1\in [a^\dag]$; and
\[
\mu^{(2)}_{n}(g) = \frac{\sum_{u_2\in\mathcal{W}^n_{a^\dag,a}(g,g^{-1})}e^{r^n(u_2x_2)}}{2\sum_{u\in\mathcal{W}^n_{a^\dag,a}}e^{r^n(ux_2)}},
\]
for some $x_2\in [a]$.

Define $\mu_{2n}=\mu^{(1)}_{n}\star\mu^{(2)}_{n}$, where $\star$ indicates the convolution \[
\mu_{2n}(g) = \sum_{g_1,g_2\in G : g_1g_2=g}\mu^{(1)}_{n}(g_1) \mu^{(2)}_{n}(g_2).
\] 
Define the symmetric random walk operators 
\[
M_{2n,H}: \ell^2(G/H)\to \ell^2(G/H)
\]
\[
M_{2n,H} = \sum_{g\in G} \mu_{2n}(g)\rho_{H}.
\]
The spectral radius can be found to be 
\[
\text{spr}(M_{2n,H}) = \limsup_{k\to\infty}\langle M_{2n,H}^k\mathds{1}_{e_{G/H}},\mathds{1}_{e_{G/H}}\rangle^{1/k},
\] 
where $\mathds{1}_{e_{G/H}}\in \ell^2(G/H)$ is the indicator function on the identity of $G/H$. Therefore,
\[
\log \text{spr}(M_{2n,H}) = \limsup_{k\to\infty} \frac{1}{k}\log \sum_{h\in H} (\mu_{2n})^{\star k}(h).
\]
We claim that there is a sequence $C_n>0$ with $\limsup_{n\to\infty}C_n^{1/n}\to 1$ such that 
\[
\frac{1}{k}\log \sum_{h\in H} \mu_{2n}^{\star k}(h) \le \frac{1}{k}\log A_{n,k}- \log B_{n} + \log C_n ;
\]
where
\[
A_{n,k} = 
\sum_{h\in H}
\sum_{u\in\mathcal{W}^{2nk}_{a,a}(h)}e^{r^{2nk}(ux_2)} ,
\]
and
\[
B_{n} = \sum_{u\in\mathcal{W}^n_{a,a^\dag}}e^{r^n(ux_1)}\sum_{u\in\mathcal{W}^n_{a,a^\dag}}e^{r^n(ux_1)} .
\]
Note that 
\[
\limsup_{k\to\infty}\frac{1}{k}\log A_{n,k} \le  
2nP_{\mathrm{Gur}}(r,T_{\psi_{H}});
\] 
and
\[
\lim_{n\to\infty}\frac{1}{n}\log B_{n} = 2P(r,\sigma), 
\]
since $P(r,\sigma)= \lim_{n\to\infty} \frac{1}{n}\log \sum_{w\in \mathcal{W}^n_{u,v}}e^{r^n(wx)} $
for any $u,v\in\mathcal{W}$, $x\in [v]$.

Assuming the claim (whose proof we give later), we have that
\[
\frac{1}{n} \text{spr}(M_H) \le 2P_{\mathrm{Gur}}(r,T_{\psi_{H}}) - \frac{1}{n}\log B_{n} + \frac{1}{n}\log C_n .
\]

Write $B_{2n}$ for the support of $\mu_{2n}$. Since $A$ is assumed to generate $G$, and since $B_{2n}$ is finite, we have that
\[
\inf_{H\in\mathcal{N}} \kappa_{A}(\rho_{H},\mathds{1})\implies \inf_{H\in\mathcal{N}} \kappa_{B_{2n}}(\rho_{H},\mathds{1})=0,
\]
for each $n$. Therefore $\sup_{H\in\mathcal{N}} \text{spr}(M_{2n, H})=1$. We can choose a sequence $H_n$ such that 
\[
0 = \limsup_{n\to\infty} \frac{1}{n}\log \text{spr}(M_{2n,H_n}).
\]
We conclude that
\[
0 = \limsup_{n\to\infty} \frac{1}{n} \text{spr}(M_{2n, H_n}) \le \limsup_{n\to\infty} \left(2P_{\mathrm{Gur}}(r,T_{\psi_{H_n}}) - 2P(r,\sigma) \right) ,
\]
i.e. $\sup_n P_{\mathrm{Gur}}(r,T_{\psi_{H_n}}) = P(r,\sigma)$, as required.

We now prove the claim.
We have that
\[
\sum_{u_1\in\mathcal{W}^n_{a,a^\dag}(g,g^{-1})}e^{r^n(u_1x_1)} = 
\sum_{u_1\in\mathcal{W}^n_{a,a^\dag}(g)}e^{r^n(u_1x_1)}+e^{r^n(u^\dag_1x_1)},
\]
and by weak symmetry,
\[
\sum_{u_1\in\mathcal{W}^n_{a,a^\dag}(g,g^{-1})}e^{r^n(u_1x_1)} \le 
\sum_{u_1\in\mathcal{W}^n_{a,a^\dag}(g)}2D_n e^{r^n(u_1x_1)}.
\]
Therefore, 
\begin{align*}
&\sum_{\substack{g_1,\ldots,g_{2l}\in G :\\ g_1\cdots g_{2k}=h}} 
\prod_{i=1}^k 
\sum_{u_1\in\mathcal{W}^n_{a,a^\dag}(g_i,g_i^{-1})}e^{r^n(u_1x_1)}
\sum_{u_2\in\mathcal{W}^n_{a^\dag,a}(g_{i+1},g_{i+1}^{-1})}e^{r^n(u_2x_2)}
\\
&\le
\sum_{\substack{g_1,\ldots,g_{2k}\in G :\\ g_1\cdots g_{2k}=h}} 
\prod_{i=1}^k  
\sum_{u_1\in\mathcal{W}^n_{a,a^\dag}(g_{i})}2D_n e^{r^n(u_1x_1)}
\sum_{u_2\in\mathcal{W}^n_{a^\dag,a}(g_{i+1})}2D_n e^{r^n(u_2x_2)}
\\
&=
(2D_n)^{2k}\sum_{\substack{g_1,\ldots,g_{2l}\in G :\\ g_1\cdots g_{2k}=h}} 
\prod_{i=1}^k  
\sum_{u_1\in\mathcal{W}^n_{a,a^\dag}(g_{i})} e^{r^n(u_1x_1)}
\sum_{u_2\in\mathcal{W}^n_{a^\dag,a}(g_{i+1})} e^{r^n(u_2x_2)}.
\end{align*}
Using the H\"{o}lder property of $r$, for each $i$ we have,
\begin{align*}
&\sum_{u_1\in\mathcal{W}^n_{a,a^\dag}(g_i)}
\sum_{u_2\in\mathcal{W}^n_{a^\dag,a}(g_{i+1})}
e^{r^n(u_1x_1)}e^{r^n(u_2x_2)}
\\
&\le
\exp\left({|r|_\theta\sum_{j=0}^n \theta^j}\right)
\sum_{u\in\mathcal{W}^n_{a,a}(g_ig_{i+1})}
e^{r^{2n}(ux_2)}
\\
&\le c_\theta \sum_{u\in\mathcal{W}^n_{a,a}(g_ig_{i+1})}
e^{r^{2n}(ux_2)},
\end{align*}
where $c_\theta = \exp\left({|r|_\theta\sum_{j=0}^\infty \theta^j}\right)$. We then bound the $k$-length product as
\[
\prod_{i=1}^k
c_\theta
\sum_{u\in\mathcal{W}^n_{a,a}(g_ig_{i+1})}
e^{r^{2n}(ux_2)}
\le
c_\theta^{2k}
\sum_{u\in\mathcal{W}^n_{a,a}(h)}
e^{r^{2n}(ux_2)}.
\]

Putting these bounds together gives
\begin{align*}
&\frac{1}{k}\log \sum_{h\in H}\mu_{2n}^{\star k}(h) 
\\
&= 
\frac{1}{k}\log \left(\sum_{h\in H}\sum_{\substack{g_1,\ldots,g_{2k}\in G :\\ g_1\cdots g_{2k}=h}} 
\prod_{i=1}^k 
\sum_{u_1\in\mathcal{W}^n_{a,a^\dag}(g_i,g_i^{-1})}e^{r^n(u_1x_1)}
\sum_{u_2\in\mathcal{W}^n_{a^\dag,a}(g_{i+1}g_{i+1}^{-1})}e^{r^n(u_2x_2)}\right)
\\
&-\log \sum_{u\in\mathcal{W}^n_{a,a^\dag}}e^{r^n(ux_1)}\sum_{u\in\mathcal{W}^n_{a,a^\dag}}e^{r^n(ux_1)}
\\
&\le
\frac{1}{k}
\log
\sum_{h\in H}
\sum_{u\in\mathcal{W}^{2nk}_{a,a}(h)}e^{r^{2nk}(ux_2)}
-\log B_n + 2\log 2D_n
+
2\log c_\theta.
\end{align*}
Writing $C_n = (2D_nc_\theta)^2$, concludes the proof of the claim. 
\end{proof}

\section{Auxiliary lemmas}\label{auxilliary}


Recall that $L_r$ has an isolated simple maximum eigenvalue at $e^{P(r,\sigma)}$ and strictly positive eigenfunction $h\in F_\theta$. We say that a function $r$ is \emph{normalised} if $L_r 1 =1$. Setting $\hat{r} = r - \log h + \log h\circ \sigma - P(r,\sigma)$, we have that $L_{\hat{r}}1=1$, and moreover the spectra are related by $\text{spr}(\mathcal{L}_{r,H})=e^{P(r,\sigma)}\text{spr}(\mathcal{L}_{\hat{r},H})$. To see this, observe that the following inequality is satisfied pointwise for every $H$,
\[
e^{nP(r)}\frac{\inf_x h(x)}{\sup_x h(x)}(\mathcal{L}_{\hat{r},H})^n
\le
(\mathcal{L}_{r,H})^n \le 
e^{nP(r)}\frac{\sup_x h(x)}{\inf_x h(x)}(\mathcal{L}_{\hat{r},H})^n.
\]
Therefore it suffices to prove Theorem \ref{transferop}(\ref{transferop2}) under the assumption that $r$ is normalised. 

We write $\mathcal{C}^c_H\subset \mathcal{C}^\infty_H$ for the cone of non-negative functions that are constant in the $\Sigma^+$-coordinate; that is, $f\in \mathcal{C}^c_H$ if $f(x,g)=f(z,g)\ge 0$ for any $x,z\in \Sigma^+$  and $g\in G/H$. Note that $\mathcal{L}_{r,H}$ does not preserve this cone.

As in the previous section, we write $\rho_H: G \to \mathcal{U}(\ell^2(G/H))$ for the permutation representation determined by $H\unlhd G$. Write $\ell^2_+(G/H)$ for the cone of non-negative functions in $\ell^2(G/H)$.

\begin{lemma}\label{norm}
There exists a constant $C$ such that, for any $H\unlhd G$,
\begin{align*}
&\|(\mathcal{L}_{r,H})^n\| = \sup \left\{ \|(\mathcal{L}_{r,H})^nf\|: f\in \mathcal{C}^c_H, \|f\| = 1\right\}
\\
&\le C\sup \left\{ \left\Vert
\sum_{\sigma^n y = z}e^{r^n(y)}\rho_H(\psi^n(y))f
\right\Vert_{\ell^2(G/H)} : f\in \ell^2_+(G/H), \left\Vert f\right\Vert=1, z\in\Sigma^+ \right\}
\end{align*}
\end{lemma}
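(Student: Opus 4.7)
The plan is to prove the two assertions separately. The equality is a standard replacement-with-sup argument, while the inequality follows from bounded distortion of the H\"{o}lder potential $r$, which lets us interchange the supremum over $x$ with the sum over $g$.

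For the equality, given any $f\in\mathcal{C}^\infty_H$ define $\bar f(x,g):=\sup_{z\in\Sigma^+}|f(z,g)|$, so that $\bar f\in\mathcal{C}^c_H$ and $\|\bar f\|=\|f\|$ directly from the definition of the norm on $\mathcal{C}^\infty_H$. Positivity of $e^{r^n(y)}$ and the pointwise bound $|f(y,g\psi^n(y))|\le\bar f(y,g\psi^n(y))$ give $|\mathcal{L}_{r,H}^n f(x,g)|\le\mathcal{L}_{r,H}^n\bar f(x,g)$, and hence $\|\mathcal{L}_{r,H}^n f\|\le\|\mathcal{L}_{r,H}^n\bar f\|$. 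Since $\mathcal{C}^c_H\subset\mathcal{C}^\infty_H$, the operator norm is attained on the cone.

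For the inequality, fix $f\in\mathcal{C}^c_H$ and identify it with the element of $\ell^2_+(G/H)$ that it determines. For any $a\in\mathcal{W}$ and any $x,z\in[a]$, and for any $w\in\mathcal{W}^n$ with $A(w_{n-1},a)=1$ (so that $wx$ and $wz$ are both admissible), the iterates $\sigma^k(wx)$ and $\sigma^k(wz)$ agree on their first $n-k$ coordinates, so
\[
|r^n(wx)-r^n(wz)|\le\sum_{k=0}^{n-1}|r|_\theta\theta^{n-k}\le\frac{|r|_\theta\theta}{1-\theta}=:\log B.
\]
Since $\psi$ depends on only one coordinate, $\psi^n(wx)=\psi^n(wz)=\psi^n(w)$, so $\mathcal{L}_{r,H}^n f(x,g)\le B\,\mathcal{L}_{r,H}^n f(z,g)$ for all $x,z\in[a]$, $a\in\mathcal{W}$, and $g\in G/H$. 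Choosing a representative $z_a\in[a]$ for each $a\in\mathcal{W}$,
\[
\sup_x|\mathcal{L}_{r,H}^n f(x,g)|^2\le B^2\max_{a\in\mathcal{W}}\mathcal{L}_{r,H}^n f(z_a,g)^2\le B^2\sum_{a\in\mathcal{W}}\mathcal{L}_{r,H}^n f(z_a,g)^2.
\]
Summing over $g\in G/H$ and bounding the sum over $a$ by $|\mathcal{W}|$ times the supremum over $z$, then recognising $\mathcal{L}_{r,H}^n f(z,\cdot)=\sum_{\sigma^n y=z}e^{r^n(y)}\rho_H(\psi^n(y))f$ as the vector appearing in the right-hand side of the lemma, one obtains the desired bound with $C:=\sqrt{|\mathcal{W}|}\,B$.

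The only subtle point is checking that the distortion estimate is uniform in $w$ and in the points $x,z\in[a]$, and that the admissibility constraint is stable as $x$ varies over $[a]$; both are automatic from the one-letter dependence of $\psi$ and from the fact that $A(w_{n-1},a)=1$ depends only on the first coordinate of $x$. There is no real obstacle beyond this bookkeeping.
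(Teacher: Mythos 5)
Your proof is correct and follows essentially the same route as the paper: the equality via the envelope $\bar f(x,g)=\sup_z|f(z,g)|$, and the inequality via bounded distortion of $r$ within one-cylinders (using that $\psi$ depends on a single letter) to replace the varying point by a fixed representative and recognise the resulting vector in $\ell^2_+(G/H)$. The only differences are cosmetic bookkeeping -- you take a maximum over cylinder representatives where the paper splits $\mathcal{L}_{r,H}^n f=\sum_u\mathds{1}_{[u]}\mathcal{L}_{r,H}^n f$ and picks the maximising letter, yielding the slightly better constant $\sqrt{\#\mathcal{W}}\,e^{|r|_\theta\theta/(1-\theta)}$ in place of $\#\mathcal{W}\,e^{|r|_\theta/(1-\theta)}$.
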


\begin{proof}
The first equality is straightforward: for any $f\in \mathcal{C}^\infty_H$, define $\hat{f}(x,g)=\sup_{x\in\Sigma}|f(x,g)|$. Then $\hat{f}\in \mathcal{C}^c_H$ with $\|\hat{f}\|=\|f\|$; and we have $\|(\mathcal{L}_{r,H})^n \hat{f}\|\ge \|(\mathcal{L}_{r,H})^n f\|$.

We now show the second inequality. We have
\begin{align*}
\|(\mathcal{L}_{r,H})^n f\| = &\|\sum_{u\in\mathcal{W}}\mathds{1}_{[u]}(\mathcal{L}_{r,H})^n f\| \\
&\le 
 \#\mathcal{W}\max_{u\in \mathcal{W}}\sqrt{\sum_{g\in G} \sup_{x\in [u]} \left|(\mathcal{L}_{r,H})^nf(x,g)\right|^2}.
\end{align*}
Write $u=v$ for the letter attaining this maximum and fix $z\in[v]$. For any $x\in [v]$ and any $f\in \mathcal{C}^c_H$ we have 
\[
(\mathcal{L}_{r,H})^nf(x,g) = \sum_{\sigma^n y=x} e^{r^n(y)} f(z,\psi_H^n(y)g).
\]
In addition,
\[
\sup_{x\in [v]}|(\mathcal{L}_{r,H})^nf(x,g)| \le \exp\left({\frac{|r|_\theta}{1-\theta}}\right) \sum_{\sigma^n y=z} e^{r^n(y)} f(z,\psi_H^n(y)g) .
\]
Writing $\hat{f}(g)=f(z,g)$, we have that $\hat{f}(g)\in \ell^2_+(G/H)$, and 
\[
(\mathcal{L}_{r,H})^nf(x,g) = \sum_{\sigma^n y=x} e^{r^n(y)} \rho_H(\psi^n(y))\hat{f}(g).
\]
Therefore we have,
\begin{align*}
\|(\mathcal{L}_{r,H})^n f\| = 
&\le \#\mathcal{W}\exp\left({\frac{|r|_\theta}{1-\theta}}\right)\sqrt{\sum_{g\in G} \left(\sum_{\sigma^n y=z} e^{r^n(y)} \rho_H(\psi^n(y))\hat{f}(g) \right)^2}
\\
&= C\left| \sum_{\sigma^n y =z} e^{r^n(y)}\rho_H(\psi^n(y))\hat{f} \right|
\end{align*}
with $C=\#\mathcal{W}\exp\left({\frac{|r|_\theta}{1-\theta}}\right)$. This completes the proof.
\end{proof}

As we have related the spectrum of the group extended transfer operators to expressions involving representations of $G$ in $\ell^2(G/H)$, we will make use of some general results about these representations.
\begin{lemma}[F{\o}lner\cite{Folner}]\label{nonamen}
If $G$ is non-amenable then:
\[
\forall \epsilon>0 : \, \exists B\subset G, B \text{ finite }: \forall E\subset G, E \text{ finite } : \exists b\in B : \# E\cap E\cdot b \le \epsilon \# E .
\]
\end{lemma}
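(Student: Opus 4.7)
My plan is to deduce this quantitative F{\o}lner statement from Kesten's characterization of non-amenability via the spectral radius of a random walk, together with a one-line averaging argument. The point is that while the negation of F{\o}lner's definition directly produces a single $\epsilon_0 > 0$ and finite $A_0$ that works, getting arbitrarily small $\epsilon$ requires boosting, and the cleanest boosting mechanism is to raise a random walk operator to a large power.

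Concretely, assume $G$ is non-amenable. By Kesten's theorem (the converse direction to Proposition \ref{Ollivier} applied to $H = \{e\}$, using Hulanicki's characterization already cited in the paper), there is a finite symmetric generating set $A \subset G$ such that the self-adjoint random walk operator $M = \frac{1}{\#A}\sum_{a\in A}\pi_G(a)$ on $\ell^2(G)$ has spectral radius $\sigma < 1$. Writing $\mu$ for the uniform measure on $A$ and $\mu^{*n}$ for its $n$-fold convolution, we have $M^n = \sum_g \mu^{*n}(g)\pi_G(g)$, and for any finite $E \subset G$ the bound $\langle M^n \mathds{1}_E, \mathds{1}_E\rangle \le \sigma^n \#E$ unfolds (using symmetry of $\mu^{*n}$) into
\[
\sum_{g\in G}\mu^{*n}(g)\,\#(E \cap Eg) \le \sigma^n \#E.
\]
Since $\mu^{*n}$ is a probability measure supported on the finite set $A^n$, the left-hand side is a weighted average, so there exists $b \in A^n$ with $\#(E \cap Eb) \le \sigma^n \#E$.

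Given $\epsilon > 0$, the plan is then simply to choose $n = n(\epsilon)$ with $\sigma^n \le \epsilon$ and set $B = A^n$; this $B$ is finite and independent of $E$, and the displayed inequality above produces the required $b \in B$ for every finite $E$. The main obstacle is really just correctly invoking Kesten's theorem in the form needed: the paper has stated Hulanicki's equivalence ($\kappa_A(\pi_G,\mathds{1}) = 0$ iff amenable) and has Ollivier's one-way implication $\mathrm{spr}(M) < 1 \Rightarrow \kappa_A > 0$, so the remaining content is the converse direction, which is classical. Once that is in place, the rest is a one-step averaging argument requiring no iteration.
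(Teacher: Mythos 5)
Your argument is correct in substance but follows a genuinely different route from the paper. The paper's proof is purely combinatorial: it quotes F{\o}lner's characterisation of amenability in the averaged form (there exists $\epsilon_0>0$ such that for every finite collection $a_1,\dots,a_n$ some finite $E$ has $\frac{1}{n}\sum_i\#(E\cap Ea_i)\ge\epsilon_0\#E$), negates it, and observes that if the average overlap is at most $\epsilon\#E$ then a single $a_i$ already satisfies $\#(E\cap Ea_i)\le\epsilon\#E$. The arbitrarily small $\epsilon$ thus comes for free from F{\o}lner's statement, so no boosting and no spectral theory are needed. You instead obtain the smallness by importing Kesten's theorem and powering up the random-walk operator; your unfolding of $\langle M^n\mathds{1}_E,\mathds{1}_E\rangle\le\sigma^n\#E$ into a weighted average of $\#(E\cap Eg)$ over $g$ in the finite support $A^n$, followed by pigeonhole, is sound (the identification $\langle\pi_G(g)\mathds{1}_E,\mathds{1}_E\rangle=\#(E\cap Eg^{-1})$ is indeed absorbed by the symmetry of $\mu^{\ast n}$, and $\|M^n\|=\sigma^n$ by self-adjointness). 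What your approach buys is an explicit quantitative set $B=A^n$ with overlap bound $\sigma^n$; what it costs is the reliance on Kesten's theorem, which the paper does not contain -- Proposition~\ref{Ollivier} gives only the implication in the direction you do not need -- whereas the paper's proof uses nothing beyond the cited paper of F{\o}lner.

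One caveat: the lemma is stated for an arbitrary countable non-amenable $G$, and your first step posits a finite symmetric generating set, i.e.\ that $G$ is finitely generated. In the paper's application $G$ does come with a finite generating set, so this is harmless in context, but for the lemma as stated you should either pass to a finitely generated non-amenable subgroup $G_0\le G$ (such a subgroup exists, since a directed union of amenable groups is amenable) and note that under the right $G_0$-action $\ell^2(G)$ decomposes into copies of $\ell^2(G_0)$, so the bound $\sigma<1$ persists for the operator on $\ell^2(G)$ and the averaging argument goes through for all finite $E\subset G$; or else explicitly add the finite-generation hypothesis, which suffices for the paper's purposes.
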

\begin{proof}
In \cite{Folner} it is shown that $G$ is amenable if and only if there exists $\epsilon_0$ such that for any finite collection $a_1,\ldots, a_n\in G$ there exists a finite $E\subset G$ with 
\[
\frac{1}{n}\sum_{i=1}^n \# (E\cap Ea_i) \ge \epsilon_0\# E.
\]
Negating these statements gives that $G$ is non-amenable if and only if for all $\epsilon$ there exists a finite collection $a_1,\ldots, a_n\in G$ 
such that for every finite $E\subset G$ we have
\[
\frac{1}{n}\sum_{i=1}^n \# (E\cap Ea_i) \le \epsilon\# E .
\]
And since
\[
\frac{1}{n}\sum_{i=1}^n \# (E\cap Ea_i) \le \epsilon\# E \implies \#(E\cap Ea_i)\le \epsilon\#E \text{ for at least one } a_i
\]
the lemma follows.
\end{proof}

Let $\kappa>0$ and $\rho: G\to \mathcal{U}(\mathcal{H})$ be such that $\kappa_A(\rho,\mathds{1})\ge\kappa/2$. Let $f\in \mathcal{H}$ with $\left| f\right| =1$. By the parallelogram law, there is $a\in A$ with 
\[
\left| \rho(a)f+f\right| = 2\sqrt{1-\left| \rho(a)f-f\right| ^2/4}\le 2\sqrt{1-\kappa^2/16}.
\]
We write $\kappa_1=\sqrt{1-\kappa^2/16}$.

\begin{lemma}\label{nonamen}
Let $\epsilon>0$ be arbitrary. There is a finite subset $B=B(\epsilon)\subset G$, such that the following holds. Assume that $\rho: G\to \mathcal{U}(\mathcal{H})$ with $\kappa_A(\rho,\mathds{1})=\kappa>0$, for some finite subset $A$ of $G$. For any $f\in (\mathcal{H},\left| \cdot\right| )$ and for each $n$ we may choose $E_n=E_n(f,\epsilon)\subset G$ with the following properties:
\begin{enumerate}
\item $\left| \sum_{g\in E_n} \rho(g)f\right|  \le 2^n(1-\kappa_1)^n\left| f\right| $;
\item $\# E_n \ge 2^n(1-\epsilon)^n$
\item For any $g\in E_n$, we have $g\in (A\cup B)^n$.
\end{enumerate}
\end{lemma}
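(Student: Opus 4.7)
We construct $E_n$ by induction on $n$, starting from $E_0 = \{e_G\}$, for which all three conditions hold trivially. For the inductive step, suppose $E_n$ has been built; set $v_n := \sum_{g \in E_n} \rho(g) f$. When $v_n \neq 0$, the parallelogram identity applied to $v_n / |v_n|$, combined with $\kappa_A(\rho, \mathds{1}) \ge \kappa/2$, produces an element $a_n \in A$ satisfying $|\rho(a_n) v_n + v_n| \le 2\kappa_1 |v_n|$ (and $a_n$ is chosen arbitrarily when $v_n = 0$). The natural candidate is $E_{n+1} := E_n \cup h_{n+1} E_n$ for a suitably chosen $h_{n+1} \in A \cup B$.

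If $h_{n+1} = a_n$ and the sets $E_n$ and $a_n E_n$ were disjoint, one would immediately get $v_{n+1} = v_n + \rho(a_n) v_n$, hence condition (1) with factor $2\kappa_1$ and condition (2) with cardinality doubling. The finite auxiliary set $B = B(\epsilon)$ is introduced precisely to repair failures of this disjointness: in the spirit of the F{\o}lner-type characterisation stated immediately above as Lemma \ref{nonamen}, one chooses $B$ so that for every finite $E \subset G$ there exists $b \in B$ with $\#(E \cap bE) \le \epsilon \#E$ (translation between the right-multiplication statement of F{\o}lner and the required left-multiplication form proceeds via $g \mapsto g^{-1}$). The bookkeeping is cleanest if $E_n$ is tracked as the multiset of products $h_n^{\varepsilon_n} \cdots h_1^{\varepsilon_1}$ indexed by $\varepsilon \in \{0,1\}^n$, so that $v_n = \bigl(\prod_{i=1}^n (I + \rho(h_i))\bigr) f$ telescopes; condition (1) then follows level by level from the parallelogram bound on each factor, condition (3) is automatic, and condition (2) becomes a lower bound on the number of distinct products.

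The principal obstacle is that the two demands on $h_{n+1}$ pull in different directions: the parallelogram step selects $a_n \in A$ to shrink the norm, while the F{\o}lner step selects $b_n \in B$ to break collisions, and a single element $h_{n+1} \in A \cup B$ cannot in general accomplish both. My plan is to interleave two kinds of doubling steps, pure norm-shrinking via $A$ and pure collision-breaking via $B$, and to verify that the exponential factors compose correctly: the $2\kappa_1 < 2$ gain on norm-shrinking steps absorbs the "wasteful" steps in which $h_{n+1} \in B$ merely doubles the multiset without improving the norm, and the $(1-\epsilon)$ loss in cardinality is confined to the collision-breaking steps. Showing that a single finite $B = B(\epsilon)$ suffices uniformly, and that the interleaved sequence of $n$ steps maintains all three conditions with the stated exponential bounds, will constitute the bulk of the remaining technical work.
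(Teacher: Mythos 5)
Your starting point (induction on $n$, the parallelogram bound producing $a\in A$ with $\left|\rho(a)v_n+v_n\right|\le 2\kappa_1\left|v_n\right|$, and a finite F{\o}lner-type set $B=B(\epsilon)$ to control overlaps) is the same as the paper's, but you stop exactly where the paper's one essential idea enters, and the interleaving plan you propose in its place does not deliver the lemma. The paper never alternates a ``norm-shrinking step via $A$'' with a ``collision-breaking step via $B$'': it does both jobs in a \emph{single} step by translating by a product of an $A$-letter and a $B$-letter, after a two-case estimate. Concretely, with $f_n=\sum_{g\in E_n}\rho(g)f$, one picks $a\in A$ with $\left|\rho(a)f_n-f_n\right|\ge\kappa\left|f_n\right|$ and then picks $b\in B$ almost-disjointing the \emph{enlarged} set $E_n\cup E_na$, i.e.\ $\#\bigl((E_n\cup E_na)\cap(E_n\cup E_na)\,b\bigr)\le\epsilon\,\#(E_n\cup E_na)$, so that both candidate overlaps $\#(E_na\cap E_nab)$ and $\#(E_n\cap E_nab)$ are at most $2\epsilon\,\#E_n$. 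Then either $b$ moves $\rho(a)f_n$ by at least $\tfrac{\kappa}{2}\left|f_n\right|$, in which case one takes $E_{n+1}=E_na\cup E_nab$ and applies the parallelogram bound to that pair; or it does not, and then by the triangle inequality $\left|\rho(ba)f_n-f_n\right|\ge\left|\rho(a)f_n-f_n\right|-\left|\rho(ba)f_n-\rho(a)f_n\right|\ge\tfrac{\kappa}{2}\left|f_n\right|$, and one takes $E_{n+1}=E_n\cup E_nab$. In both cases the same $b$ gives the $(1-\epsilon)$ cardinality growth \emph{and} the doubling element moves the relevant vector by at least $\tfrac{\kappa}{2}$ of its norm; this is precisely why the lemma is calibrated with $\kappa/2$ and $\kappa_1=\sqrt{1-\kappa^2/16}$ (and it costs only that elements of $E_n$ are words of length $2n$ rather than $n$ in $A\cup B$, which is harmless for the application). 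This dichotomy is the idea missing from your write-up.

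Your fallback strategy has a genuine flaw, not just an unfinished computation. Interleaving pure $A$-steps and pure $B$-steps cannot meet conditions (1) and (2) at the stated rates: a collision-breaking step contributes a factor $2$ to the norm bound with no gain, and a norm-shrinking step need not add any new group elements, so after $n$ steps you obtain only roughly the square roots of the required exponential bounds; that is too weak for the lemma as stated and also for its use in Theorem \ref{transferop}(\ref{transferop2}), whose proof needs the cardinality growth rate to strictly beat the norm-decay rate step for step. Worse, a pure norm-shrinking step is not even sound at the level of sets: if $E_n\cap aE_n$ is large, then $\sum_{g\in E_n\cup aE_n}\rho(g)f$ differs from $v_n+\rho(a)v_n$ by $\sum_{g\in E_n\cap aE_n}\rho(g)f$, whose norm can be of order $\#E_n\left|f\right|\gg\left|v_n\right|$ for a general $f$. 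Your multiset remedy repairs the norm recursion but proves a statement about $\prod_i\bigl(I+\rho(h_i)\bigr)f$, which is no longer the quantity $\sum_{g\in E_n}\rho(g)f$ in the lemma once products coincide; since the lemma must hold for arbitrary vectors $f\in\mathcal{H}$ (not only non-negative ones where overcounting could be discarded by monotonicity), that discrepancy cannot be waved away. So the gap is real: the single-step ``$ab$'' construction, with the F{\o}lner element chosen for $E_n\cup E_na$ and the two-case $\kappa/2$ estimate, is what you need and do not have.
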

\begin{remark}
Notice that though $E_n$ depends on $f$ and on $\mathcal{H}$, the subset $B$ does not depend on $f$ or $\mathcal{H}$.
\end{remark}

\begin{proof}
Let $\epsilon>0$ be arbitrary. Let $B=B(\epsilon)$ be the finite set given in Lemma \ref{nonamen}. We may assume that $e\in B$.
Let $f\in \mathcal{H}$ be arbitrary.
We proceed by induction.

\noindent\textbf{Base case} $n=1$. We may choose $a\in A$ such that
\[
\left| \rho(a)f + f\right| \ge \kappa\left| f\right|  \ge \frac{\kappa}{2}\left| f\right| ,
\]
and so
\[
\left| \rho(a)f + f\right|  \le 2(1-\kappa_1)\left| f\right| ,
\]
satisfying condition 1.
Then setting $E_1 = \left\{a,e \right\}$ completes the base case of the induction.

\noindent\textbf{Inductive step.} Assume the claim is true for $n$.
Set $f_n= \sum_{g\in E_n}\rho(g)f$.
Let $a\in A$ such that $\left| \rho(a)f_n - f_n \right|  \ge \kappa\left| f_n\right| $.
Let $b\in B$ such that 
\[
\# (E_n\cup E_na) \cap (E_n\cup E_na) \cdot b \le \epsilon \# E_na=\epsilon\#E_n.
\]
Notice that it follows that
\[
\# E_na\cap E_nab \le \epsilon \# (E_n\cup E_na) \le 2\epsilon\# E_n,
\]
and similarly
\[
\# E_n\cap E_nab \le 2\epsilon\# E_n.
\]
We have two cases to consider:

\noindent Case 1. $\left| \rho(b)\rho(a)f_n - \rho(a)f_n \right|  \ge \frac{\kappa}{2}\left| f_n\right| $.
The result follows easily setting $E_{n+1}=E_na\cup E_nab$.

\noindent Case 2. $\left| \rho(b)f_n - f_n \right|  \ge \frac{\kappa}{2}\left| f_n\right| $.
In this case
\begin{align*}
\left|  \rho(ba)f_n - f_n \right|  &= \left|  \rho(ba)f_n - \rho(a)f_n +\rho(a)f_n -f_n \right|  \\
&\ge
\left| \rho(a)f_n-f_n\right|  - \left| \rho(b)\rho(a)f_n - \rho(a)f_n\right|  \\
&\ge \frac{\kappa}{2}\left| f_n\right| 
\end{align*}
The result follows by setting $E_{n+1} = E_n\cup E_nab$.
\end{proof}

\section{Proof of Theorem \ref{transferop}(ii)}
We are now almost in a position to prove the theorem. Therefore, assume that $(\Sigma^+,\psi,G)$ satisfies (LVR), and write the associated map $\chi$, linear constant $L$ and remainder set $\mathcal{R}\subset G$. Let $\mathcal{N}$ be a collection of normal subgroups of $G$ for which
\[
\inf_{H\in \mathcal{N}} \kappa_{A/H}(\pi_{G/H},\mathds{1}) = \kappa>0 ,
\]
for a finite generating set $A\subset G$;
Recall from section \ref{auxilliary} that it suffices to prove the theorem under the condition that $r$ is normalised.

The aim of this section is to find $N_1,N_2\in \mathbb{N}$ and $\eta(\kappa)>0$ such that, for any $H\in \mathcal{N}$, for any $f\in \ell^2_+(G/H)$, $\left|f\right|=1$, and any $x\in\Sigma^+$,
\[
\left|\sum_{\sigma^{nN_1} y = x} e^{r^{nN_1}(y)}\rho_H(\psi^{nN_1}(y))f \right|  \le (1-\eta(\kappa))^{nN_2}.
\]
In this case, by the inequality given in Lemma \ref{norm},
\begin{align*}
&\|((\mathcal{L}^{N_1}_{H,r})^n\|\\
&\le C
\sup\left\{\left|\sum_{\sigma^{nN_1} y = x} e^{r^{nN_1}(y)}\rho_H(\psi^{nN_1}(y))f \right| : f\in \ell^2_+(G/H), \left|f\right| =1, x\in \Sigma^+ \right\} 
\\
&\le C(1-\eta(\kappa))^{nN_2},
\end{align*}
and so $\text{spr}(\mathcal{L}_{r,H})\le (1-\eta(\kappa))^{\frac{N_2}{N_1}}<1$, as required.

We proceed with this aim. We simplify the notation by identifying $\psi_H$ with $\rho_H \circ \psi$. In this way our aim is to find
\[
\left|\sum_{\sigma^{nN_1} y = x} e^{r^{nN_1}(y)}\psi_H^{nN_1}(y)f \right| \le (1-\eta(\kappa))^{nN_2}.
\]
Recall that $\kappa_{A}(\rho_H,\mathds{1})=\kappa_{A/H}(\pi_{G/H},\mathds{1})$ and so, by hypothesis,
\[
\inf_{H\in \mathcal{N}} \kappa_{A}(\rho_H,\mathds{1}) = \kappa .
\]

Once and for all, fix $\epsilon>0$ sufficiently small to satisfy the following inequality
\[
(1-\epsilon)> (1-\kappa_1),
\]
where $\kappa_1=\sqrt{(1-\kappa^2/16}$, as in the previous section.
Now that $\epsilon$ is fixed, the set $B=B(\epsilon)$ from Lemma \ref{nonamen} is fixed.

The following constants appear in the estimations: $\alpha=\min_{x\in\Sigma^+} e^{r(x)}$, $p$ is the aperiodicity constant for $\Sigma^+$, $W=\#\mathcal{W}, R=\#\mathcal{R}$, $L$ is the (LVR) linear constant, and we write $K=\max_{g\in  A\cup B \cup \mathcal{R}} k_g$, where $k_g$ denotes the length of the word $\chi(g)$.
With these definitions
we have $\psi^{k_g}(\chi(g))=r_0(g)gr_1(g)$ with $k_g\le mKL$, for some $r_0(g),r_1(g)\in \mathcal{R}$, for each $m$, and each $g\in (A\cup B)^{m}$.
Fix $m$ sufficiently large to satisfy the inequality 
\[
\frac{(1-\epsilon)^{m}}{(1-\kappa_1)^m} > \exp{\frac{|r|_\theta}{1-\theta}}(mKLRW)^2.
\]
It will be useful to write
\[
\kappa_2 = \frac{1}{mKL}(1-\epsilon)^m-\exp{\frac{|r|_\theta}{1-\theta}} (RW)^2mKL(1-\kappa_1)^m>0.
\]

\begin{lemma}\label{paths}
Let $H\in\mathcal{N}$ be arbitrary.
For every $f\in \ell^2_+(G/H)$, $\left|f\right|=1$, and every $v_0,v_1\in\mathcal{W}$ there exists $P_{v_0,v_1}\subset \mathcal{W}_{v_0}^{mKL+2p}$ such that
\[
\left|\sum_{w\in P_{v_0,v_1}} \psi_H^{mKL+2p}(w)f\right| \le 2^m((RW)^2mKL(1-\kappa_1)^m).
\]
\end{lemma}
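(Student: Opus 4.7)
The plan is to combine the amenability-type bound of Lemma~\ref{nonamen} with the (LVR) hypothesis and the aperiodicity of $\Sigma^+$ to build $P_{v_0,v_1}$ from the short words $\chi(g)$, padded into admissible words of length $mKL+2p$ starting with $v_0$. I would fix $f\in\ell^2_+(G/H)$ with $|f|=1$ and apply Lemma~\ref{nonamen} with representation $\rho_H$, function $f$, and $n=m$, noting that $\kappa_A(\rho_H,\mathds{1})=\kappa_{A/H}(\pi_{G/H},\mathds{1})\ge\kappa$. This gives $E_m\subset(A\cup B)^m$ with $\bigl|\sum_{g\in E_m}\pi_H(g)f\bigr|\le 2^m(1-\kappa_1)^m$.

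For each $g\in E_m\subset(A\cup B)^m$, the linear-growth clause of (LVR) gives $k_g\le mKL$, and visibility produces $\chi(g)\in\mathcal{W}^{k_g}$ with $\psi^{k_g}(\chi(g))=r_0(g)\,g\,r_1(g)$ and $r_0(g),r_1(g)\in\mathcal{R}$. I classify $g$ by its \emph{type}
\[
\tau(g)=(k_g,\chi(g)^0,\chi(g)^{k_g-1},r_0(g),r_1(g)),
\]
so there are at most $(RW)^2 mKL$ types. For each type $\tau=(k,a,b,r_0,r_1)$, aperiodicity lets me choose once-and-for-all admissible paddings: a prefix $\alpha_\tau$ of length $mKL+p-k\ge p$ starting with $v_0$ and ending with a letter admissible before $a$, and a suffix $\beta_\tau$ of length $p$ starting with a letter admissible after $b$. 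Then $w(g):=\alpha_{\tau(g)}\chi(g)\beta_{\tau(g)}\in\mathcal{W}_{v_0}^{mKL+2p}$ and
\[
\psi^{mKL+2p}(w(g))=u_{\tau(g)}\,g\,v_{\tau(g)},\quad u_\tau=\psi(\alpha_\tau)r_0,\ v_\tau=r_1\psi(\beta_\tau).
\]

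I take $P_{v_0,v_1}$ to be (a refinement of) $\{w(g):g\in E_m\}$ and expand the sum by type, writing $E_m^\tau=\{g\in E_m:\tau(g)=\tau\}$ and $f_\tau=\pi_H(v_\tau)f$:
\[
\sum_{w\in P_{v_0,v_1}}\pi_H(\psi^{mKL+2p}(w))f=\sum_\tau \pi_H(u_\tau)\sum_{g\in E_m^\tau}\pi_H(g)f_\tau.
\]
The unitarity of $\pi_H(u_\tau)$ and the triangle inequality reduce the problem to bounding $\sum_\tau \bigl|\sum_{g\in E_m^\tau}\pi_H(g)f_\tau\bigr|$, for which it suffices to bound each per-type sum by $2^m(1-\kappa_1)^m$ and then multiply by the type-count $(RW)^2 mKL$.

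The main obstacle is that Lemma~\ref{nonamen} controls only the \emph{full} sum over the set it produces, not partial sums over subsets $E_m^\tau$. My plan to overcome this is to run Lemma~\ref{nonamen} \emph{separately for each type}~$\tau$, applied to the unit vector $f_\tau$, producing a per-type set $\tilde E_m^\tau\subset(A\cup B)^m$ with $\bigl|\sum_{g\in \tilde E_m^\tau}\pi_H(g)f_\tau\bigr|\le 2^m(1-\kappa_1)^m$; then the piece $P^\tau_{v_0,v_1}$ is built from those $g\in\tilde E_m^\tau$ that happen to have type $\tau$ (needed for admissibility of $\alpha_\tau\chi(g)\beta_\tau$), and $P_{v_0,v_1}=\bigcup_\tau P^\tau_{v_0,v_1}$. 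The delicate point — and the main technical content of the proof — is controlling the restriction to type-$\tau$ elements within $\tilde E_m^\tau$ without losing the norm bound; I expect this to require the specific recursive structure of the construction in Lemma~\ref{nonamen} together with the flexibility afforded by the remainder set $\mathcal{R}$ in (LVR).
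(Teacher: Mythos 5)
Your plan stalls exactly at the point you flag, and that point is not a technicality but the whole content of the lemma. Lemma \ref{nonamen} only controls the norm of the \emph{full} sum $\sum_{g\in E_m}\rho_H(g)f$; it gives no control whatsoever over partial sums $\sum_{g\in E_m^\tau}\rho_H(g)f_\tau$ over the type classes $E_m^\tau\subset E_m$, and these can a priori be as large as $\#E_m^\tau$. Your proposed repair --- running Lemma \ref{nonamen} once per type on $f_\tau$ and then keeping only those $g\in\tilde E_m^\tau$ that happen to have type $\tau$ --- has the identical defect: the type-$\tau$ elements of $\tilde E_m^\tau$ are again an arbitrary subset of the set the lemma produced, so the norm bound does not restrict to them. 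Nothing in the recursive construction of $E_n$ (which only alternates between $E_na\cup E_nab$ and $E_n\cup E_nab$) lets you localize the cancellation to a subset determined by $\chi$, so the ``delicate point'' you defer is, as far as I can see, not fixable along these lines.

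The paper avoids the type decomposition altogether by exploiting two things your outline does not use. First, positivity: $f\ge 0$ and all the operators in sight are permutation operators on $\ell^2(G/H)$, so every summand is a non-negative vector and enlarging the index set of such summands can only increase the $\ell^2$-norm. Hence the sum over the actual padded words (each $g$ contributing its own specific prefix, suffix and remainders $r_0(g),r_1(g)$) is dominated by the expression in which one sums over \emph{all} paddings $u_1(j,s)$ and \emph{all} remainders $r_0,r_1\in\mathcal{R}$, independently of $g$; the finitely many choices are then pulled out by the triangle inequality and account exactly for the factor $(RW)^2mKL$. Second, the order of quantifiers: $E_m$ is chosen from Lemma \ref{nonamen} not for $f$ but for the pre-aggregated vector $F=\sum_{r\in\mathcal{R}}\rho_H(r)\sum_{j,s}\psi_H^{mKL+p-s}(u_1(s,j))f$, so that after the domination step the quantity to be estimated is literally $\bigl|\sum_{g\in E_m}\rho_H(g)F\bigr|\le 2^m(1-\kappa_1)^m\,|F|$, with $|F|\le RWmKL$. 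In short: instead of trying to split $E_m$ according to the combinatorics of $\chi$, you should sum over all remainder/padding data and apply the Kazhdan-distance decay once, to that aggregated vector; without the positivity-monotonicity step and this choice of input to Lemma \ref{nonamen}, the argument does not close.
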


\begin{proof}
We describe a procedure to sandwich an arbitrary word $w$ between any two letters $v_0,v_1$; and then apply this to $w_g=\chi(g)$.

Let $v_0,v_1\in\mathcal{W}$ be arbitrary. For each $v\in\mathcal{W}$ fix $u_0(v)\in\mathcal{W}^p$ with initial letter $(u_0(v))^0= v_0$, and such that $u_0(v)v$ is admissible. 
For each $0\le s\le mKL$, and each $v\in\mathcal{W}$, fix $u_1(v,s)\in\mathcal{W}^{mKL+p-s}$ with initial letter $(u_1(v,s))^0=v$ and such that $u_1(v,s)v_1$ is admissible.

Let $H\in\mathcal{N}$ be arbitrary, and let $f\in \ell^2_+(G/H)$.
Fix 
\[
E_m=E_m\left(\sum_{r\in \mathcal{R}} \rho_H(r)\sum_{\substack{j\in \mathcal{W},\\ 0<s\le mKL+p}}\psi_H^{mKL+p-s}(u_1(s,j)) f\right).
\]

Define $P_{v_0,v_1}$ to be the collection of words $p_g=u_1(w_g^0,k_g)w_gu_0(w_g^{k_g-1})$, $g\in E_m$ (recall that $w^0_g$, $w^{k_g-1}_g$ denote the initial and terminal letters of $w_g$ respectively). Then $P_{v_0,v_1}\subset\mathcal{W}^{mKL+2p}$.
Since $f\ge 0$, we can make the following estimations,
\begin{align*}
&\left|\sum_{w\in P_{v_0,v_1}} \psi_H^{mKL+2p}(w) f\right|
=
\left|
\sum_{g\in E_m} \psi_H^{p}(u_0(w^0_g))\psi_H^{k_g}(w_g)\psi_H^{mKL+p-k_g}(u_1(w_g^{k_g-1},k_g)) f
\right|
\\
&\le 
\left|
 \sum_{i\in \mathcal{W}}\psi_H^{p}(u_0(i))\sum_{g\in E_m}\rho_H(r_0(g))\rho_H(g)\rho_H(r_1(g))\sum_{\substack{j\in \mathcal{W},\\ 0<s\le mKL}}\psi_H^{mKL+p-s}(u_1(s,j)) f
\right|
\\
&\le 
W
\left|
\sum_{r_0\in \mathcal{R}}\rho_H(r_0)\sum_{g\in E_m}\rho_H(g)\left(\sum_{r_1\in \mathcal{R}} \rho_H(r_1)\sum_{\substack{j\in \mathcal{W},\\ 0<s\le mKL}}\psi_H^{mNq+p-s}(u_1(s,j)) f\right)
\right|
\\
&\le RW 2^m(1-\kappa_1)^m
\left|\sum_{r_1\in \mathcal{R}} \rho_H(r_1)
\sum_{\substack{j\in \mathcal{W},\\ 0<s\le mKL}}\psi_H^{mKL+p-s}(u_1(s,j)) f
\right|
\\
&\le (RW)^2mKL2^m(1-\kappa_1)^m \left| f\right|.
\end{align*}
\end{proof}

For every $x\in [v_0]$ we may extend $P_{v_0,v_1}$ to $P_{x,v_1}\subseteq \sigma^{-(mKL+2p)}x$.
We informally refer to elements of $P_{x,v_1}$ as \emph{paths} (from $v_1$ to $x$).
We are now ready to prove the theorem.

\begin{proof}[Proof of Theorem \ref{transferop}(\ref{transferop2})]
For simplicity, we write $M=mKL+2p$. In order to prove the theorem recall it suffices to show that,
\[
\left|\sum_{\sigma^{nM} y = x} e^{r^{nM}(y)}\psi_H^{nM}(y)f \right| \le (1-\alpha^{M}2^m\kappa_2)^{nm}
\]
for any $H\in\mathcal{N}$, any $f\in \ell^2_+(G/H)$ with $\left|f\right|=1$, and for any $x\in \Sigma^+$. Note that $\alpha,m,M,\kappa_2$ do not depend on $H$ or on $f$.

\noindent\textbf{Base case.}
Let $x\in\Sigma^+$ be given. Fix some $v_1\in\mathcal{W}$ and let $P_{x,v_1}$ be given by Lemma \ref{paths}.
We have,
\begin{align*}
&\left|\sum_{\sigma^{nM} y = x} e^{r^{nM}(y)}\psi_H^{nM}(y)f \right| \\
&\le
\left|\sum_{w\in\mathcal{W}^{M}_x} (e^{r^{M}(wx)}-\alpha^M\mathds{1}_{P_{x,v_1}}(w))\psi_H^{M}(w)f\right| +
\left|\sum_{w\in\mathcal{W}^{M}_x, w\in P_{x,v_1}} \alpha^M\psi_H^{M}(w)f\right|\\
&\le
\left(1-\alpha^{M}\#P_{x,v_1}\right)
 +
\alpha^{M}\left|\sum_{w\in\mathcal{W}^{M}_x, w\in P_{x,v_1}} \psi_H^{M}(w)f\right|\\
&\le
\left(1-\frac{1}{mKL}2^m(1-\epsilon)^m\alpha^{M}\right) +
\alpha^{M}(RW)^2mKL2^m(1-\kappa_1)^m \\
&=
1-\alpha^{M}2^m\left(\frac{1}{mKL}(1-\epsilon)^m- (RW)^2mKL(1-\kappa_1)^m\right)\\
&\le 1-\alpha^{M}2^m\kappa_2.
\end{align*}

\noindent\textbf{Inductive step.} Assume that for every $f\in \ell^2_+(G/H)$, and every $x\in\Sigma^+$,
\[
\left|\sum_{\sigma^{nM} y = x} e^{r^{nM}(y)}\psi_H^{nM}(y)f \right| \le (1-\alpha^{M}\kappa_2)^{nm}.
\]
We will show that for every $f\in \ell^2_+(G/H)$ and every $x\in\Sigma^+$,
\[
\left|\sum_{\sigma^{(n+1)M} y = x} e^{r^{(n+1)M}(y)}\psi_H^{(n+1)M}(y)f \right| \le (1-\alpha^{M}\kappa_2)^{(n+1)m}.
\]
With this aim, let $f\in \ell^2_+(G/H)$ and $x\in\Sigma^+$ be arbitrary.
Fix some $v_1\in\mathcal{W}$ and $\hat{x}\in[v_1]$. Let $P_{x,v_1}$ correspond to $E_m=E_m\left(\sum_{\sigma^{nM} y = \hat{x}}\psi_H^{nM}(y)f\right)$. We have
\begin{align*}
&\left|\sum_{\sigma^{(n+1)M} y = x} e^{r^{(n+1)M}(y)}\psi_H^{(n+1)M}(y)f \right|
\\
&=
\left|\sum_{\sigma^{M}x^\prime=x}\sum_{\sigma^{nM} y = x^\prime} e^{r^{M}(x^\prime)}e^{r^{nM}(y)}\psi_H^{mKL+2}(x^\prime)\psi_H^{nM}(y)f \right|
\\
&\le\left|
\sum_{\sigma^{M}x^\prime=x}\sum_{\sigma^{nM} y = x^\prime} (e^{r^{M}(x^\prime)}-\alpha^M\mathds{1}_{P_{x,v_1}}(x^\prime))e^{r^{nM}(y)}\psi_H^{M}(x^\prime)\psi_H^{nM}(y)f\right|
\\
&+
\left|\sum_{x^\prime\in P_{x,v_1}}\sum_{\sigma^{nM} y =x^\prime} \alpha^M e^{r^{nM}(y)}\psi_H^{M}(x^\prime)\psi_H^{nM}(y)f 
\right|.
\end{align*}
We estimate the second term as
\begin{align*}
&\left|\sum_{x^\prime\in P_{x,v_1}}\sum_{\sigma^{nM} y =x^\prime} \alpha^{M}e^{r^{nM}(y)}\psi_H^{M}(x^\prime)\psi_H^{nM}(y)f 
\right|
\\
&\le
\alpha^{M}\exp\left({\sum_{i=1}^{M}|r|_\theta\theta^{i}}\right)
\left|\sum_{x^\prime\in P_{x,v_1}}\psi_H^{M}(x^\prime)\sum_{\sigma^{nM} y =\hat{x}} e^{r^{nM}(y)}\psi_H^{nM}(y)f \right| 
\\
&\le
\alpha^{M}\exp\left({\frac{|r|_\theta}{1-\theta}}\right)(RW)^2mKL2^m(1-\kappa_1)^m
\left|\sum_{\sigma^{nM} y =\hat{x}} e^{r^{nM}(y)}\psi_H^{nM}(y)f 
\right| .
\end{align*}
Write $x_{\max}$ for the element in $\Sigma^+$ maximizing
\[
z\mapsto \left|\sum_{\sigma^{nM} y = z}e^{r^{nM}(z)}\psi_H^{nM}(y)f\right| .
\]
We have,
\begin{align*}
&\left|
\sum_{\sigma^{M}x^\prime=x}\sum_{\sigma^{nM} y = x^\prime} (e^{r^{M}(x^\prime)}-\alpha^M\mathds{1}_{P_{x,v_1}}(x^\prime))e^{r^{nM}(y)}\psi_H^{M}(x^\prime)\psi^{nM}(y)f\right|
\\
&\le
\sum_{\sigma^{M}x^\prime=x} (e^{r^{M}(x^\prime)}-\alpha^M\mathds{1}_{P_{x,v_1}}(x^\prime))
\left|\sum_{\sigma^{nM} y = x^\prime}e^{r^{nM}(y)}\psi_H^{nM}(y)f\right|
\\
&\le
\sum_{\sigma^{M}x^\prime=x} (e^{r^{M}(x^\prime)}-\alpha^M\mathds{1}_{P_{x,v_1}}(x^\prime))
\left|\sum_{\sigma^{nM} y = x_{\max}}e^{r^{nM}(y)}\psi_H^{nM}(y)f\right|
\\
&\le
(1-\alpha^{M}\frac{1}{mKL}2^m(1-\epsilon)^m)
\left|\sum_{\sigma^{nM} y = x_{\max}}e^{r^{nM}(y)}\psi_H^{nM}(y)f\right| .
\end{align*}
We therefore conclude the inductive step,
\begin{align*}
&\left|\sum_{\sigma^{(n+1)M} y = x} e^{r^{(n+1)M}(y)}\psi_H^{(n+1)M}(y)f \right|
\\
&\le
\left(
\alpha^{M}\exp\left({\frac{|r|_\theta}{1-\theta}}\right) (RW)^2mKL2^m(1-\kappa_1)^m + 1-\frac{1}{mKL}2^m(1-\epsilon)^m\alpha^{M}
\right)
\\
&\times\left|\sum_{\sigma^{nM} y =x_{\max}} e^{r^{nM}(y)}\psi_H^{nM}(y)f \right|
\\
&\le
(1-\alpha^{M}2^m\kappa_2)
\left|\sum_{\sigma^{nM} y =x_{\max}} e^{r^{nM}(y)}\psi_H^{nM}(y)f \right|.
\end{align*}
\end{proof}

\section{Proof Theorem \ref{transferop}(\ref{transferop3})}
As in section \ref{auxilliary}, we normalise a H\"{o}lder function $f$ by setting $\hat{f} = f - \log h + \log h\circ \sigma - P(f,\sigma)$, where $h$ is the eigenfunction (of maximal eigenvalue) for $L_f$. Let $s\mapsto r_s\in F_\theta$ be continuous (in the $\|\cdot\|_\theta$ topology) for $s\in [-1,1]$. Write $h_s$ for the maximal eigenfunction for $L_{r_s}$. Since the maximal eigenvalue is simple and isolated for all $s$, it follows that $s\mapsto h_s$ is continuous for $s\in [-1,1]$; see for instance \cite[Chapter 4]{Kato}. Write $\hat{r}_s = r_s - \log h_s + \log h_s\circ \sigma - P(r_s,\sigma)$. Then it follows that $s\mapsto\hat{r}_s \in F_\theta$ is also continuous for $s\in [-1,1]$.


\begin{proof}[Proof of Theorem \ref{transferop}(\ref{transferop3})]
Assume that $\inf_{H\in \mathcal{N}}\kappa_{A/H}(\pi_{G/H},\mathds{1})=\kappa>0$. 
As in the previous section, choose $\epsilon<\kappa_1$ and choose $m$ such that
\[
\frac{(1-\epsilon)^m}{(1-\kappa_1)^m}> (mKLRW)^2 \exp\left({\frac{|\hat{r}_0|_\theta}{1-\theta}}\right).
\]
Since $s\mapsto \hat{r}_s$ is continuous, we may choose $\delta>0$ such that
\[
\frac{(1-\epsilon)^m}{(1-\kappa)^m}> (mKLRW)^2 \exp\left({\frac{|\hat{r}_s|_\theta}{1-\theta}}\right),
\]
for all $s\in [-\delta,\delta]$.
Write 
\[
\beta=\min_{s\in [-\delta,\delta]} \min_{x\in\Sigma^+}\exp\left({\hat{r}_s(x)}\right).
\]
and
\[
\kappa_2 =\left(
\frac{1}{mKL}(1-\epsilon)^m-
\left(\max_{s\in [-\delta,\delta]}\exp{\frac{|\hat{r}_s|_\theta}{1-\theta}}\right)
 (RW)^2mKL(1-\kappa_1)^m
\right),
\]
Note that $\kappa_1$ depends only on $\kappa$; $W$ and $p$ depend only on $\Sigma^+$; and $R$ depends only on $\psi$. Therefore $\kappa_2$ is invariant under $H\in\mathcal{H}$ and $s\in[-\delta,\delta]$.

Following the proof of Theorem \ref{transferop}(\ref{transferop2}) we deduce that for each $s\in [-\delta,\delta]$,
\[
\text{spr}(\mathcal{L}_{{r_s},H})\le (1-\beta^{mKL+2p}2^m\kappa_2)\text{spr}(L_{r_s}),
\]
i.e.
\[
\sup_{s\in[-\delta,\delta]}\sup_{H\in\mathcal{N}}\text{spr}(\mathcal{L}_{{r_s},H})<\text{spr}(L_{r_0}).
\]
\end{proof}

\end{document}